\documentclass[11pt]{amsart} 

\usepackage{amsfonts, amsmath, lscape}

\usepackage[utf8]{inputenc}
\usepackage{comment}
\usepackage[T1]{fontenc}
\usepackage[english]{babel}
\usepackage{enumitem}
\usepackage{amsthm}
\usepackage{amssymb}
\usepackage{hyperref}
\usepackage{tikz}
\usepackage{tikz-network}
\usetikzlibrary{shapes}
\usetikzlibrary{positioning}
\tikzstyle{vertex}=[circle,draw,inner sep=0pt,minimum size=6pt]
\newcommand{\vertex}{\node[vertex]}
\usepackage{csquotes}
\usepackage{listings}
\lstset{
  basicstyle=\ttfamily,
  mathescape
}
\usepackage{float, lscape}

\newtheorem{theorem}{Theorem}[section]
\newtheorem{lemma}{Lemma}[section]
\newtheorem{proposition}{Proposition}[section]

\usepackage{caption} \captionsetup[table]{skip=10pt}

\begin{document}

\title[Rank Augmentation Theorem from rank $3$ to rank $4$]{A rank augmentation theorem for rank $3$ string C-group representations of the symmetric groups}

\author{Julie De Saedeleer}
\address{Julie De Saedeleer, Département de Mathématique, Université libre de Bruxelles, C.P.216, Boulevard du Triomphe, 1050 Brussels, Belgium}
\email{julie.de.saedeleer@ulb.be}
\author{Dimitri Leemans}
\address{Dimitri Leemans (corresponding author), Département de Mathématique, Université libre de Bruxelles, C.P.216, Boulevard du Triomphe, 1050 Brussels, Belgium}
\email{leemans.dimitri@ulb.be}
\author{Jessica Mulpas}
\address{Jessica Mulpas, Département de Mathématique, Université libre de Bruxelles, C.P.216, Boulevard du Triomphe, 1050 Brussels, Belgium}
\email{jessica.mulpas@ulb.be}
\keywords{Abstract regular polytopes, string C-group representations, symmetric groups, permutation representation graphs}
\maketitle

\begin{abstract}
We give a rank augmentation technique for rank $3$ string C-group representations of the symmetric group $S_n$ and list the hypotheses under which it yields a valid string C-group representation of rank $4$ thereof.
\end{abstract}

\maketitle

\section{Introduction}

In the Atlas of abstract regular polytopes for small almost simple groups ~\cite{AtlasSmallGroups}, Leemans and Vauthier have made all regular polytopes whose automorphism group is a symmetric group of degree between $5$ and $9$ available. In 2011, Fernandes and Leemans, inspired by the experimental data above, proved three theorems about string C-group representations for $S_n$.
Among other things, they showed that $S_n$ has string C-group representations of rank $r$ for all $r\in\{3, \ldots, n-1\}$ (see \cite[Theorem 3]{PolSymGroups}). 
Recently, Cameron, Fernandes and Leemans showed that, for a given $n$ sufficiently large, the number of string C-group representations of rank $r$ for $S_n$ with $r\geq (n+3)/2$ is equal to the number of string C-group representations of rank $r+1$ for $S_{n+1}$~\cite{CFL2022}. Hence when $n$ and $r$ are large enough, a lot is known about string C-group representations of rank $r$ for $S_n$.

Similar results were obtained for the alternating groups. Cameron, Fernandes, Leemans and Mixer showed that the highest possible rank for a string C-group representation of $A_n$ is $\lfloor (n-1)/2\rfloor$ when $n\geq 12$~\cite{altn}. 
More recently, Fernandes and Leemans showed that, if $n\geq 12$, the group $A_n$ has a string C-group representation of rank $r$ for all $r\in\{3, \ldots, \lfloor (n-1)/2\rfloor\}$~\cite{altnallranks}. 

Note also, for sake of completeness that, in 2013, Leemans and Kiefer counted all pairs of commuting involutions in $A_n$ and $S_n$ up to isomorphism. These pairs $\{\rho_0,\rho_2\}$ may be extended to rank $3$ string C-group representations $(S_n,\{\rho_0,\rho_1,\rho_2\})$. That was a first step in the classification of rank $3$ string C-groups for $S_n$, and, to the authors' opinion, obtaining such a classification is hopeless at present.

The results of Fernandes and Leemans~\cite{PolSymGroups,altnallranks} led Brooksbank and Leemans, in 2019, to look for and find a sufficient condition under which a string C-group representation of rank $r$ for a group $G$ yields a rank $r-1$ string C-group representation for this same group (see~\cite[Theorem 1.1]{RRT}).
This paper takes the natural first step into reversing the process of Brooksbank and Leemans for symmetric groups and developing a rank augmentation technique, i.e. giving a sufficient condition for a rank $3$ string C-group representation for $S_n$ to yield a rank $4$ one. As the reader will see, augmenting the rank of a string C-group is a much more complicated task than reducing it.

\begin{table}
\begin{tabular}{|c|c|c|c|c|c|c|c|c|c|c|c|c||}
\hline
$G$&Rk 3&Rk 4&RRT&RAT\\
\hline 
$S_5$&4&1&1&1\\
$S_6$&2&4&2&2\\
$S_7$&35&7&6&6\\
$S_8$&68&36&13&12\\
$S_9$&129&37&20&15\\
$S_{10}$&413 &203 &69&49\\
$S_{11}$&1221 &189 &70&70\\
\hline
\end{tabular}
\caption{The impact of the rank reduction theorem in rank four and a tentative rank augmentation theorem on $S_n$ ($5\leq n\leq 11$).}\label{sn}
\end{table}

In Table~\ref{sn}, we give the numbers obtained from the following computer experiment:
we took the string C-group representations of rank 3 and 4 of $S_n$ with $5\leq n \leq 11$ and compared the number of rank $4$ ones that can be reduced in rank according to Leemans and Brooksbank to the number that can be obtained by using the technique we present here. The first and second columns give the number of representations of rank $3$ and $4$ for $S_n$ up to isomorphism and duality.
For each representation of rank 4, we get a 4-tuple of generators $(\rho_0,\rho_1,\rho_2,\rho_3)$.
In the third column (named RRT), we give the number of rank $4$ string C-group representations of $S_n$ one can start from to get a rank $3$ one using the Rank Reduction Theorem.
In the last column, we enumerated those string C-group representations of rank 4 that have a transposition (i.e. an element of $S_n$ that only swaps two points and fixes all the others) as first or last generator. In the latter case, the dual has a transposition as first generator. We only count each one when reducing its rank according to Leemans and Brooksbank's technique gives one of the rank $3$ representations.
This motivates our main Theorem (see Theorem~\ref{RAT1}). Indeed, we decide to study what happens if we suppose that we start from a rank $3$ string C-group representation $(G,\{\rho_0,\rho_1,\rho_2\})$ of $G\cong S_n$ and we split the involution $\rho_1$ in two, taking one of its transpositions as $\rho_{-1}$ and removing that transposition from $\rho_1$. Such a rank augmentation procedure will thus always yield four generators, the first one being a transposition.

The paper is organised as follows.
In Section~\ref{prelim}, we give the basic notions needed to understand this paper.
In Section~\ref{rrt}, we recall the Rank Reduction Theorem of Brooksbank and Leemans mentioned above.
In Section~\ref{RRT3-4}, we state our Rank Augmentation Theorem (Theorem~\ref{RAT1}) for string C-groups of rank three.
In Section~\ref{proof}, we prove Theorem~\ref{RAT1}.
In Section~\ref{conclusion}, we conclude this paper by giving some examples motivating the extra hypotheses of Theorem~\ref{RAT1} and we suggest ways to go further in the exploration of rank augmentation.

\section{Preliminaries}\label{prelim}
\subsection{String C-groups}

A {\em string C-group representation} (or string C-group for short) is a pair $(G,S)$ with $S:=\{\rho_0,\rho_1,...,\rho_{r-1}\}$ a generating set of involutions of the group $G$, satisfying the following two properties.
\begin{description}[style=unboxed,leftmargin=0cm]
\item[(SP)] 
the {\em string property}, that is $(\rho_i\rho_j)^2=1_G$ for all $i,j \in \{ 0,1,...,r-1 \}$ with $\mid i-j \mid \geq 2$;
\item[(IP)] the {\em intersection property}, that is $\langle \rho_i \mid i \in I \rangle \cap \langle \rho_j \mid j \in J \rangle = \langle \rho_k \mid k \in I \cap J \rangle$ for any $I, J \subseteq \{ 0, 1,...,r-1 \}$.
\end{description}
When $(G,S)$ only satisfies the string property it is called a {\em string group generated by involutions} (or {\em sggi} for short). The {\em rank} of $(G,S)$ is the size of $S$.

Note that, from the definition above, one can  observe that string C-groups are smooth quotients of Coxeter groups with string diagrams. It is also a well-known fact that string C-groups are in one-to-one correspondence with  abstract regular polytopes, the latter being equivalent geometric formulations of the former ~\cite{ARP}.

For any subset $I\subseteq \{0, \ldots, r-1\}$, we denote $\langle \rho_j : j \in \{0, \ldots, r-1\}\setminus I\rangle$ by $G_I$. When $G$ is a string C-group, so is $G_I$. If $I = \{i\}$, we denote $G_{\{i\}}$ by $G_i$. Similarly, if $I = \{i,j\}$, we denote $G_{\{i,j\}}$ by $G_{i,j}$.


The {\em Schl\"afli type} of a string C-group representation $(G,\{\rho_0, \ldots, \rho_{r-1}\})$ is the ordered set $\{p_1,p_2,...,p_{r-1} \}$ where $p_i$ is the order of the element $\rho_{i-1}\rho_{i}$ for $i=1, \ldots, r-1$.


The {\em dual} of a string C-group representation $(G,\{\rho_0,\rho_1,...,\rho_{r-1}\})$ is the string C-group representation $(G,\{\rho_{r-1},\rho_{r-2},...,\rho_{0}\})$.


As in~\cite{RankofPolAltGroups}, for a sggi $(G, \{\rho_0,\rho_1,...,\rho_{r-1}\})$, an involution $\tau$ in a supergroup of $G$ such that $\tau\notin G$ and a fixed $k\in\{0,1,...,r-1\}$, one can define a new sggi $(G^*,S^*)$ where $S^* := \{\rho_i \tau^{\delta_{i,k}}\vert i\in\{0,1,...,r-1\}\}$ and $G^* := \langle S^*\rangle$
that we call the {\em sesqui-extension} of $G$ with respect to $\rho_k$ and $\tau$ (or {\em $k$-sesqui-extension} of $G$ with respect to $\tau$).
We have the following result.

\begin{proposition} \cite[Lemma 5.4]{RankofPolAltGroups}\label{sesqui}
If $G=\langle \rho_0,\rho_1,...,\rho_{r-1}\rangle$ and $G^*=\langle \rho_i \tau^{\delta_{i,k}}\vert i\in\{0,1,...,r-1\}\rangle$ is a $k$-sesqui extension of $G$ then $G^*\cong G$ or $G^*\cong G\times C_2$ (when $\tau\in G^*$).
\end{proposition}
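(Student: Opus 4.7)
The plan is to exhibit a surjective homomorphism $\phi:G^*\to G$ whose kernel has order at most two, with the kernel being trivial precisely when $\tau\notin G^*$. To set the stage, let $H:=\langle G,\tau\rangle$ inside the ambient supergroup, so that $G^*\leq H$ since each generator of $S^*$ lies in $H$. The crucial intermediate goal is to prove that $H=G\times\langle\tau\rangle\cong G\times C_2$, because then projecting onto the first factor and restricting to $G^*$ will deliver the required map.

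To establish that direct-product structure, I would verify that $\tau$ centralises every generator $\rho_i$ of $G$; coupled with the hypothesis $\tau\notin G$, this suffices. Since $\rho_k\tau$ is an involution in $S^*$, one immediately has $[\tau,\rho_k]=1$. For $|i-k|\geq 2$, expanding the string-property relation $(\rho_i\cdot\rho_k\tau)^2=1$ of the sggi $(G^*,S^*)$ and using $\rho_i\rho_k=\rho_k\rho_i$ from the string property of $(G,S)$ collapses, after cancelling $\rho_k^2=1$ and exploiting $[\tau,\rho_k]=1$, to $(\rho_i\tau)^2=1$, whence $[\tau,\rho_i]=1$. The main obstacle will be commutation with $\rho_{k-1}$ and $\rho_{k+1}$: the string property does not encode these relations, so one must appeal to the specific choice of $\tau$ within the supergroup considered in~\cite{RankofPolAltGroups}, where $\tau$ is selected so as to centralise the whole of the generating set of $G$ (not merely the ones allowed by the string diagram).

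Granting centrality of $\tau$ in $H$, we have $H=G\times\langle\tau\rangle$, and the projection $\pi:H\to G$ restricts to a homomorphism $\phi:G^*\to G$ that sends $\rho_i\mapsto\rho_i$ for $i\neq k$ and $\rho_k\tau\mapsto\rho_k$; since its image already contains every $\rho_i$, it is surjective. Its kernel equals $G^*\cap\langle\tau\rangle$, a subgroup of a cyclic group of order two, and hence is either trivial or equal to $\langle\tau\rangle$. In the first case $\phi$ is an isomorphism and $G^*\cong G$. In the second case $\tau\in G^*$, so $\rho_k=(\rho_k\tau)\tau\in G^*$ and consequently $G\leq G^*$; combined with $\tau\in G^*$ this yields $G^*=H=G\times C_2$, which matches the parenthetical case of the statement.
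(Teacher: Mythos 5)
Your proof is correct and is essentially the standard argument for this result, which the paper itself does not reprove but imports from \cite{RankofPolAltGroups}: show $\langle G,\tau\rangle=G\times\langle\tau\rangle$, restrict the projection onto $G$ to $G^*$, and analyse the kernel $G^*\cap\langle\tau\rangle$. You were also right to flag that commutation of $\tau$ with $\rho_{k\pm1}$ cannot be extracted from the string property of $(G^*,S^*)$: it is part of the definition of a sesqui-extension in the cited source, where $\tau$ is required to centralise $G$, a hypothesis the present paper's abridged definition omits and without which the statement fails (for instance $G=\langle(1,2),(2,3)\rangle$, $\tau=(1,2)(4,5)$, $k=0$ gives $G^*\cong C_2\times C_2$).
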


\subsection{Permutation representation graphs and CPR graphs}

Let $(G, \Omega)$ (with $\Omega := \{1,...,n\}$) be a permutation group generated by $r$ involutions $\rho_0,\rho_1,...,\rho_{r-1}$.
We define the {\em permutation representation graph} of $G$ to be the edge-labelled undirected multigraph $\mathcal{G}$ with $\Omega$ as vertex set and an edge with label $i$ between vertices $a$ and $b$ whenever $a\neq b$ and $\rho_i(a)=b$ (in which case, of course, $\rho_i(b)=a$).

When $(G,\{\rho_0,\rho_1,...,\rho_{r-1}\})$
is a string C-group, $\mathcal{G}$ is called a {\em CPR graph} as in~\cite{cprbible}. 

The following lemma gives the possible shapes of connected components of subgraphs of a CPR graph when looking at pairs of labels that are not consecutive.

\begin{lemma}\cite[Proposition 3.5]{cprbible}\label{cprlem}
Each connected component of the subgraph of a CPR graph $\mathcal{G}$ induced by edges of labels $i$ and $j$ for $\vert i-j\vert \geq 2$ (i.e. by edges of labels $i$ and $j$ where $\rho_i$ and $\rho_j$ commute) is either a single vertex, a single edge, a double edge or an alternating square.
\end{lemma}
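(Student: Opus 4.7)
The plan is to exploit two facts together. First, the string property (SP) applied to indices with $|i-j|\geq 2$ tells us that $\rho_i$ and $\rho_j$ commute, so $\langle \rho_i,\rho_j\rangle$ is an abelian group generated by two involutions, hence a quotient of $C_2\times C_2$. Second, since $\rho_i$ is an involution on $\Omega$, every vertex $a\in\Omega$ has at most one incident edge labelled $i$ in $\mathcal{G}$ (namely $\{a,\rho_i(a)\}$ if $\rho_i(a)\neq a$), and similarly for $j$. Therefore every vertex in the induced subgraph has degree at most $2$, which already restricts each connected component to be a path or a cycle in which edge labels alternate between $i$ and $j$ (allowing the degenerate case where both labels connect the same pair of vertices).

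The key structural step is to observe that the connected component of a vertex $a$ coincides with the orbit of $a$ under $\langle \rho_i,\rho_j\rangle$, and this orbit has size $1$, $2$ or $4$ by the previous paragraph. I would then run the case analysis on the orbit size:

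\textbf{Orbit of size $1$.} Both $\rho_i$ and $\rho_j$ fix $a$, so $a$ is an isolated vertex of the induced subgraph (the single-vertex case).

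\textbf{Orbit of size $2$.} Say the orbit is $\{a,b\}$. Either both involutions swap $a$ with $b$ (giving a double edge with labels $i$ and $j$), or exactly one does: if, say, $\rho_i$ swaps $a$ and $b$ and $\rho_j(a)=a$, then commutativity forces $\rho_j(b)=\rho_j\rho_i(a)=\rho_i\rho_j(a)=\rho_i(a)=b$, so $\rho_j$ fixes $b$ as well and the component is a single edge labelled $i$ (symmetrically for $j$).

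\textbf{Orbit of size $4$.} Write the orbit as $\{a,b,c,d\}$ with $\rho_i(a)=b$ and $\rho_j(a)=d$; by commutativity, $\rho_j(b)=\rho_j\rho_i(a)=\rho_i\rho_j(a)=\rho_i(d)$, and I would call this common vertex $c$. The cases $c\in\{a,b,d\}$ collapse to orbit size $\leq 2$, so in this case the four vertices are pairwise distinct, and the four edges $\{a,b\}_i$, $\{b,c\}_j$, $\{c,d\}_i$, $\{d,a\}_j$ form an alternating square. The degree bound then ensures the component does not extend further.

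The only real (minor) obstacle is the bookkeeping in the orbit-of-size-$4$ case: ensuring that the four vertices are genuinely distinct (and not collapsing into a smaller component), which is done by the consistency checks above using commutativity. Everything else follows from the elementary observation that $\rho_i$ and $\rho_j$ together act as a quotient of $C_2\times C_2$ on $\Omega$.
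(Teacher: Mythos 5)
Your proof is correct: the identification of each connected component with an orbit of $\langle\rho_i,\rho_j\rangle$ (a group of order dividing $4$ since the two involutions commute), followed by the case analysis on orbit sizes $1$, $2$, $4$ with the commutativity computations ruling out degenerate configurations, is exactly the standard argument behind the cited Proposition 3.5, which the paper itself does not reprove. Note also that your argument uses only the string (commuting) property and not the intersection property, in agreement with the paper's remark that the lemma holds for any sggi permutation representation graph.
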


Observe that the proof of this lemma only requires $\mathcal G$ to be the permutation representation graph of an sggi.
Trivially, two generators commute unless they have adjacent corresponding edges.

Let us also notice that since $S_n$ is a transitive group in its natural action, any representation of $S_n \curvearrowright \{1,...,n\}$ by a permutation representation graph will be connected. Here we use $G\curvearrowright S$ to denote the action of a group $G$ on a set $S$.

Let $\mathcal{G}$ be the permutation representation of a string group generated by involutions $(G,\{\rho_0,\rho_1,...,\rho_{r-1}\})$ seen as a permutation group acting on a set $\Omega:=\{1, \ldots, n\}$. 
For $i,j\in\{0,1,...,r-1\}$ we define $\mathcal{G}_{i,j}$ to be the subgraph of $\mathcal{G}$ with vertex set $\Omega$ and all the edges in $\mathcal{G}$ except the ones of labels $i$ and $j$.

With the notations introduced in the previous section, we observe that $\mathcal{G}_{i,j}$ is the permutation representation graph for $G_{i,j}$ and that the vertices in its connected components form the orbits of $G_{i,j}\curvearrowright \Omega$.


In sections \ref{RRT3-4} and \ref{proof}, we will make use of some extra features when representing permutation representation graphs in order to clarify our arguments:
\begin{itemize}
\item A vertex, usually represented as $\circ$ will be filled in black when we represent a subgraph of a permutation representation graph and wish to emphasize that this particular vertex has no other incident edge than the ones appearing on the picture;
\item An edge will be crossed out when we wish to emphasize that its appearance leads to a contradiction.
\end{itemize}
\section{The Rank Reduction Theorem}\label{rrt}
In \cite{PolSymGroups}, Fernandes and Leemans have shown that when $n\geq 4$, the group $S_n$ has string C-group representations of rank $r$ for each $3\leq r \leq n-1$.
In~\cite{altn}, Cameron, Fernandes, Leemans and Mixer proved that, when $n\geq 12$, the maximum rank of a string C-group representation of $A_n$ is $\lfloor (n-1)/2 \rfloor$. Later on, Fernandes and Leemans proved in~\cite{altnallranks} that, when $n\geq 12$, the group $A_n$ has string C-group representations of rank $r$ for each $3\leq r \leq \lfloor (n-1)/2\rfloor$.
In order to do so, they used a rank reduction technique on known string C-group representations for $S_n$ and $A_n$. 
Brooksbank and Leemans then generalized this technique and discovered that it does not depend on the particular settings of symmetric and alternating groups~\cite{RRT}.

\begin{theorem}(RRT)~\cite[Corollary 1.2]{RRT}
Let $(G,\{\rho_0, \rho_1,...,\rho_{r-1}\})$ be a string C-group of rank $r \geq 4$ such that no two adjacent generators commute. If $\rho_2\rho_3$ has odd order then $(G,\{ \rho_1,\rho_0\rho_2,\rho_3,...,\rho_{r-1}\})$ is a string C-group of rank $r-1$.
\end{theorem}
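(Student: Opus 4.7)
Write $\tau_0 := \rho_1$, $\tau_1 := \rho_0 \rho_2$, and $\tau_i := \rho_{i+1}$ for $2 \leq i \leq r-2$. The plan is to verify in turn that each $\tau_i$ is an involution, that the new tuple satisfies the string property, that it generates $G$, and finally that the intersection property holds. The first three steps are quick; the intersection property will be the real work.

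For the easy checks, $(\rho_0 \rho_2)^2 = 1$ because $\rho_0$ and $\rho_2$ commute by the original SP. For the new string property, only the pairs $(\tau_1, \tau_j)$ with $j \geq 3$ need attention: since $\tau_j = \rho_{j+1}$ with $j+1 \geq 4$, the generator $\rho_{j+1}$ commutes individually with each of $\rho_0$ and $\rho_2$ by the original SP, hence with their product. For generation, set $m := \ord(\rho_2 \rho_3)$; since $\rho_0$ commutes with $\rho_2 \rho_3$,
\begin{equation*}
(\tau_1 \tau_2)^m = (\rho_0 \cdot \rho_2 \rho_3)^m = \rho_0^m (\rho_2 \rho_3)^m = \rho_0,
\end{equation*}
using that $m$ is odd and that $\rho_0$ is an involution. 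Hence $\rho_0 \in H := \langle \tau_0, \ldots, \tau_{r-2} \rangle$, so $\rho_2 = \rho_0 \tau_1 \in H$ and $H = G$.

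For the intersection property I would proceed by case analysis on subsets $I \subseteq \{0, \ldots, r-2\}$, writing $H_I := \langle \tau_i : i \in I \rangle$ and aiming at $H_I \cap H_J = H_{I \cap J}$ for all $I, J$. The key structural remark is that the generation argument applies in parallel \emph{within} $H_I$: whenever $\{1, 2\} \subseteq I$, the same identity $(\tau_1 \tau_2)^m = \rho_0$ yields $\rho_0, \rho_2 \in H_I$, so $H_I$ coincides with a standard parabolic subgroup of $(G, \{\rho_0, \ldots, \rho_{r-1}\})$. Thus every $H_I$ is describable as a subgroup of $G$ generated by a collection of original $\rho_j$'s, with the single ``mixed'' exception that when $1 \in I$ but $2 \notin I$, the element $\tau_1 = \rho_0 \rho_2$ remains genuinely composite. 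I would translate both $H_I$ and $H_J$ into such descriptions and then invoke the original IP of $G$ to conclude.

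The hardest step, and the one I expect to take most care, is precisely the mixed case, where $1$ belongs to one side but $2$ to neither: here neither the odd-order trick nor a direct translation applies, and one has to argue directly in $G$, using the assumption that no two consecutive generators commute in order to rule out spurious elements of $H_I \cap H_J$ lying outside $H_{I \cap J}$. Assembling these cases, IP follows from the original IP together with the two arithmetic/structural inputs (odd order of $\rho_2 \rho_3$ and the non-commutation of consecutive generators).
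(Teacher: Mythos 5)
The statement is quoted in this paper from Brooksbank--Leemans (their Corollary 1.2); the paper gives no proof of it, so your attempt has to stand on its own. The routine parts of your write-up are fine: $\rho_0\rho_2$ is an involution, the new string property follows from the old one, and your computation $(\rho_0\rho_2\cdot\rho_3)^m=\rho_0$ for $m=\ord(\rho_2\rho_3)$ odd is exactly the observation that reduces the Corollary to the main theorem of the cited reference, whose hypothesis is $\rho_0\in\langle\rho_0\rho_2,\rho_3\rangle$.

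The genuine gap is the intersection property, and it is not a small one: it is the entire content of the theorem, and your proposal stops at an outline precisely where the difficulty sits. Your ``key structural remark'' only disposes of the subsets $I$ containing both $1$ and $2$ (and those avoiding $1$), where $H_I$ is a standard parabolic of the original representation and the old IP can be invoked. But as soon as $1\in I$ and $2\notin I$, the subgroup $H_I$ contains the composite generator $\rho_0\rho_2$ and is \emph{not} a parabolic, so the original IP says nothing about intersections involving it. A concrete instance you would have to settle is
\[
\langle\rho_1,\rho_0\rho_2\rangle\cap\langle\rho_0\rho_2,\rho_3,\dots,\rho_{r-1}\rangle=\langle\rho_0\rho_2\rangle ,
\]
where the right-hand group equals the parabolic $\langle\rho_0,\rho_2,\dots,\rho_{r-1}\rangle$ by your odd-order identity, while the left-hand group is a dihedral group sitting skew to the parabolic structure. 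Saying that one ``argues directly in $G$, using that no two adjacent generators commute, to rule out spurious elements'' names the desired conclusion rather than proving it: non-commutation of adjacent generators does not by itself prevent, say, a nontrivial power of $\rho_1\rho_0\rho_2$ from landing in $\langle\rho_0,\rho_2,\dots,\rho_{r-1}\rangle$ outside $\langle\rho_0\rho_2\rangle$. In Brooksbank--Leemans this is exactly where the hypothesis $\rho_0\in\langle\rho_0\rho_2,\rho_3\rangle$ is put to work, via an inductive verification organized around the facet criterion (Proposition 2E16(a) quoted in this paper) and lemmas about adjoining a commuting involution to a generator (the sesqui-extension mechanism also recalled here); that analysis occupies the bulk of their paper and has no counterpart in your proposal. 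Until the mixed case is actually carried out, what you have is a correct reduction plus a plan, not a proof.
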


Note that, in the main theorem of \cite{RRT}, the sufficient condition for a string C-group representation $(G,\{\rho_0, \rho_1,...,\rho_{r-1}\})$ of rank $r$ to yield a representation of rank $r-1$ for $G$ as in the theorem above is for $\rho_0$ to belong to $\langle\rho_0\rho_2,\rho_3 \rangle$. The above theorem, that we call Rank Reduction Theorem (or RRT) here, is presented as a Corollary to this main theorem.

\section{The Rank Augmentation Theorem}\label{RRT3-4}

Suppose that we obtain a string C-group representation of rank $r-1\geq 3$ for a group $G$, say $(G,\{g_0, \tilde g_1,...,g_{r-2}\})$ by applying the RRT as stated in the previous section to a string C-group representation of rank $r\geq 4$ $(G,\{\rho_0,\rho_1,\rho_2,\rho_3,...,\rho_{r-1}\})$. Then $g_0=\rho_1$, $\tilde g_1=\rho_0\rho_2$ and $g_i=\rho_{i+1}$ for any $i\in\{2,...,r-2\}$.
If we wish to reverse the operation, we need to start with $(G,\{g_0,\tilde g_1,g_2,...,g_{r-2}\})$, find two involutions $a$ and $b$ such that $\tilde g_1=ab$ and create a new string of generators by placing $a$ in front of $g_0$ and $b$ between $g_0$ and $g_2$, in place of $\tilde g_1$. Retrieving our usual indexed notations, $a$ takes the role of a $g_{-1}$ while $b$ takes the role of a new $g_1$.

In terms of the permutation representation graph, this amounts to replace some $1$-edges by $(-1)$-edges.

\begin{center}
\begin{tikzpicture}
\node[draw=none,fill=none] (1) at (2,3) {$\rho_1$};
\node[draw=none,fill=none] (2) at (4,3) {$\rho_0\rho_2$};
\node[draw=none,fill=none] (3) at (6,3) {$\rho_3$};
\node[draw=none,fill=none] (4) at (8,3) {$\rho_4$};
\node[draw=none,fill=none] (5) at (10,3) {...};
\node[draw=none,fill=none] (6) at (3,2) {$a$};
\node[draw=none,fill=none] (7) at (5,2) {$b$};
\draw[-] (2) -- (6);
\draw[-] (2) -- (7);

\node[draw=none,fill=none] (8) at (2,4) {$g_0$};
\node[draw=none,fill=none] (9) at (4,4) {$\tilde g_1$};
\node[draw=none,fill=none] (10) at (6,4) {$g_2$};
\node[draw=none,fill=none] (11) at (8,4) {$g_3$};
\node[draw=none,fill=none] (12) at (10,4) {...};

\node[draw=none,fill=none] (13) at (0,1) {$a$};
\node[draw=none,fill=none] (14) at (2,1) {$g_0$};
\node[draw=none,fill=none] (15) at (4,1) {$b$};
\node[draw=none,fill=none] (16) at (6,1) {$g_2$};
\node[draw=none,fill=none] (17) at (8,1) {$g_3$};
\node[draw=none,fill=none] (18) at (10,1) {...};

\node[draw=none,fill=none] (19) at (0,0) {$g_{-1}$};
\node[draw=none,fill=none] (20) at (4,0) {$g_1$};

\draw[-] (13) -- (19);
\draw[-] (15) -- (20);

\end{tikzpicture}
\end{center}


Note that, if $\{g_0,\tilde g_1,g_2,...,g_{r-2}\}$ generates $S_n$ for some $n\in\mathbb{N}_0$, then so does $\{g_{-1},g_0,g_1,g_2,...,g_{r-2}\}$ since $S_n$ is the largest group acting on $n$ elements and $g_{-1}g_1=\tilde g_1$ so that $\langle g_0,\tilde g_1,g_2,...,g_{r-2}\rangle\leq \langle g_{-1},g_0,g_1,g_2,...,g_{r-2}\rangle$.

A natural first step into determining conditions under which applying such modifications to a string  C-group yields a string C-group is to treat the case where $g_{-1}$ is a transposition.

Within this framework, we prove the following rank augmentation theorem for small rank string C-group representations of $S_n$.



\begin{theorem}\label{RAT1}
Let $n\geq 5$ and $\Gamma\cong S_n$ be the natural permutation representation of $S_n$ on $n$ points. 
Let $(\Gamma,\{\rho_0,\tilde{\rho}_1,\rho_2\})$ be a rank $3$ string C-group representation of the symmetric group $\Gamma$ and let $\mathcal{G}$ be its CPR graph. 
Suppose that $\tilde{\rho}_1$ is not a transposition. 
Let $\mathcal{G}'$ be the permutation representation graph obtained by replacing, in $\mathcal{G}$, a $1$-edge that is not adjacent to any $2$-edge by a $(-1)$-edge. Let $(\Gamma,\{\rho_{-1},\rho_0,\rho_1,\rho_2\})$ be the permutation representation corresponding to $\mathcal{G}'$ (where $\rho_{-1}\rho_1=\tilde{\rho}_1$). 
Suppose that the $\Gamma_{-1,2}$-orbits are of size at most $3$ and suppose moreover that one of the following is satisfied:
\begin{enumerate}
\item $\Gamma_2$ has an orbit of size $4$ and $\Gamma_{-1}$ acts imprimitively on the connected components of  $\mathcal{G}_{-1}$ that contain more than one $\mathcal{G}_{-1,2}$-connected component on three vertices;
\item $\Gamma_2$ has an orbit of size $5$, $\rho_0$, $\rho_1$ and $\rho_2$ are all even permutations and $\Gamma_{-1}$ acts imprimitively on 
the connected components of  $\mathcal{G}_{-1}$ that contain more than one $\mathcal{G}_{-1,2}$-connected component on three vertices;
\item $\Gamma_2$ has an orbit  of size $6$.
\end{enumerate}

Then $(\Gamma,\{\rho_{-1},\rho_0,\rho_1,\rho_2\})$ is a string C-group representation for $S_n$. 
\end{theorem}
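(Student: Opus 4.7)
The plan is to establish both the string property (SP) and the intersection property (IP) for $(\Gamma, \{\rho_{-1}, \rho_0, \rho_1, \rho_2\})$. Generation of $\Gamma$ is immediate from $\tilde\rho_1 = \rho_{-1}\rho_1$, since then $\langle \rho_{-1}, \rho_0, \rho_1, \rho_2\rangle \supseteq \langle \rho_0, \tilde\rho_1, \rho_2\rangle = \Gamma$.

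For SP, the three required commutations among non-adjacent generators are verified directly. $[\rho_{-1}, \rho_1]=1$ holds by construction: $\rho_{-1}$ is extracted as a single transposition from the involution $\tilde\rho_1$, so the supports of $\rho_{-1}$ and of $\rho_1 = \tilde\rho_1\rho_{-1}$ are disjoint. $[\rho_{-1}, \rho_2]=1$ is precisely the hypothesis that the chosen $1$-edge has no incident $2$-edge, forcing disjoint supports for $\rho_{-1}$ and $\rho_2$. Finally $[\rho_0, \rho_2]=1$ is inherited from the original rank-$3$ string C-group.

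For IP, I plan to invoke the standard rank-$4$ sufficient criterion (\emph{cf.}\ Proposition 2E16(a) of McMullen and Schulte, \cite{ARP}): it is enough to show that (i) $\Gamma_{-1} = \langle \rho_0, \rho_1, \rho_2\rangle$ and (ii) $\Gamma_2 = \langle \rho_{-1}, \rho_0, \rho_1\rangle$ are string C-groups on their induced generators, and that (iii) $\Gamma_{-1} \cap \Gamma_2 = \Gamma_{-1,2} = \langle \rho_0, \rho_1\rangle$. For (i), the CPR graph of $\Gamma_{-1}$ is $\mathcal{G}'$ with the single $(-1)$-edge deleted, and the rank-$3$ IP reduces to $\langle \rho_0,\rho_1\rangle \cap \langle \rho_1,\rho_2\rangle = \langle \rho_1\rangle$, which I verify by analysing the connected components of $\mathcal{G}'_{0,2}$ via Lemma~\ref{cprlem} together with the $\Gamma_{-1,2}$-orbit bound. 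For (ii), I treat the three cases separately: the hypothesised largest-orbit sizes $4$, $5$, $6$ severely constrain the possible shapes of the subgraph on labels $-1,0,1$, and combined with the $\Gamma_{-1,2}$-orbit cap this pins down $\langle\rho_{-1},\rho_0\rangle \cap \langle\rho_0,\rho_1\rangle = \langle\rho_0\rangle$.

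The main obstacle is the central intersection (iii). An element $g \in \Gamma_{-1} \cap \Gamma_2$ must preserve the orbit partitions of both subgroups, and I need to show that its action on each joint orbit comes from the dihedral subgroup $\langle \rho_0, \rho_1\rangle$. The imprimitivity hypothesis in cases (1) and (2) ensures that the $\Gamma_{-1,2}$-orbits form a non-trivial block system for the action of $\Gamma_{-1}$ on those $\Gamma_{-1}$-orbits containing multiple size-$3$ $\mathcal{G}_{-1,2}$-components, forcing $g$ to respect that block structure. The parity hypothesis in case (2) rules out odd permutations that could otherwise enter through a $5$-cycle. The size-$6$ $\Gamma_2$-orbit in case (3) is sufficiently rigid on its own (a bipartite configuration of two $3$-orbits of $\langle\rho_0,\rho_1\rangle$ joined by the unique $(-1)$-edge) to force the conclusion. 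The bulk of the work will be the case-by-case enumeration of the local CPR configurations allowed by the hypotheses and verifying that none of them supports an element of $\Gamma_{-1} \cap \Gamma_2$ lying outside $\Gamma_{-1,2}$.
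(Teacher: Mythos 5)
Your skeleton (generation, the string property, and the reduction of IP via Proposition~\ref{breakinparts} to the three items (i)--(iii)) coincides with the paper's, but the heart of the theorem is item (iii), and there your plan contains a genuine gap. You claim that the imprimitivity hypothesis in cases (1) and (2) ``ensures that the $\Gamma_{-1,2}$-orbits form a non-trivial block system for the action of $\Gamma_{-1}$''. This is neither what the hypothesis says nor true in general: the hypothesis only asserts that \emph{some} non-trivial block system exists, and the size-$3$ $\Gamma_{-1,2}$-suborbits are typically \emph{not} blocks (a $2$-edge attached to one vertex of such a triple gives $\rho_2T\neq T$ with $\rho_2T\cap T\neq\emptyset$). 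What is actually needed — and what the paper proves in Lemma~\ref{imprim} — is the opposite configuration: \emph{every} non-trivial complete block system must split each size-$3$ $\Gamma_{-1,2}$-suborbit into three distinct blocks, and moreover the suborbit $O_{-1}$ adjacent to the $(-1)$-edge is partitioned by the \emph{same} three blocks as some other size-$3$ suborbit. It is precisely this linkage that prevents an element of $\Gamma_{-1}\cap\Gamma_2$ from acting on $\{2,3,4\}$ (resp.\ $\{3,4,5\}$) independently of the remaining size-$3$ suborbits, which is exactly the surplus that $\Gamma_2$ (acting as $S_4\times G_S$, resp.\ $S_5\times G_S$, on the vertex set) would otherwise make available; the first and third examples in the paper's conclusion show the statement genuinely fails without this. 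Your ``forcing $g$ to respect that block structure'' therefore does not yield the required conclusion, and no substitute argument is sketched.

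A second, related gap is your treatment of the parity hypothesis in case (2): the danger there is not ``odd permutations entering through a $5$-cycle'' (a $5$-cycle is even), but the transposition swapping the two endpoints of the leftmost $0$-edge. Since $\Gamma_{-1}$ is a $0$-sesqui-extension of the group $K$ obtained by deleting that $0$-edge and the $(-1)$-edge, Proposition~\ref{sesqui} gives $\Gamma_{-1}\cong K$ or $\Gamma_{-1}\cong K\times C_2$ containing that transposition; evenness of $\rho_0,\rho_1,\rho_2$ forces $\Gamma_{-1}\leq A_n$ and hence the first alternative, which is what keeps the transposition out of $\Gamma_{-1}\cap\Gamma_2$ (the paper's $S_{10}$ example shows exactly this failure mode). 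Finally, in case (3) you still owe the computation that $\Gamma_2$ acts on its $6$-point orbit as $C_2\times S_4$ and in particular cannot move $\{1,2,3\}$ independently of $\{4,5,6\}$ — ``sufficiently rigid'' is the right intuition but is not yet an argument. (Minor points: in (i) the relevant subgraphs are $\mathcal{G}'_{-1,2}$ and $\mathcal{G}'_{-1,0}$, not $\mathcal{G}'_{0,2}$ in the paper's notation, and the paper proves (ii) without invoking hypotheses (1)--(3) at all, via the $D_6/D_{12}$ dichotomy of Lemma~\ref{-1,2} and a common-rotation contradiction.)
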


As previously mentioned, it is clear that $\{\rho_{-1},\rho_0,\rho_1,\rho_2\}$ generates the whole of $S_n$.

Let us also mention that choosing the $(-1)$-edge of $\mathcal{G}'$ non-adjacent to any $2$-edge is necessary to ensure the commuting property is not broken.

Now note that, since $\Gamma_{-1,2}$ has no orbit of size more than $3$, $\Gamma_2$ can only have one of size superior to $3$ and it must be the one containing the two vertices switched by $\rho_{-1}$.  In terms of connected components in subgraphs of $\mathcal{G}'$, this is equivalent to saying that, since $\mathcal{G}'_{-1,2}$ has no connected component with more than $3$ vertices, the only connected component of $\mathcal{G}'_2$ with more than $3$ vertices is the one containing the only $(-1)$-edge in $\mathcal{G}'$.
This aforementioned connected component can either be 

\begin{center}
\begin{tikzpicture}
\vertex[fill=black] (1) at (0,0) {};
\vertex[] (2) at (1,0) {};
\vertex[] (3) at (2,0) {};
\vertex[] (4) at (3,0) {};
\draw[-] (1) -- (2)
 node[pos=0.5,above] {-1};
\draw[-] (3) -- (2)
 node[pos=0.5,above] {0};
\draw[-] (3) -- (4)
 node[pos=0.5,above] {1};
\end{tikzpicture}
\end{center}

\noindent or 

\begin{center}
\begin{tikzpicture}
\vertex[fill=black] (1) at (0,0) {};
\vertex[] (2) at (1,0) {};
\vertex[] (3) at (2,0) {};
\vertex[] (4) at (3,0) {};
\vertex[] (5) at (4,0) {};
\draw[-] (1) -- (2)
 node[pos=0.5,above] {0};
\draw[-] (3) -- (2)
 node[pos=0.5,above] {-1};
\draw[-] (3) -- (4)
 node[pos=0.5,above] {0};
\draw[-] (4) -- (5)
 node[pos=0.5,above] {1};
\end{tikzpicture}
\end{center}

\noindent or finally
 
\begin{center}
\begin{tikzpicture}
\vertex[] (1) at (0,0) {};
\vertex[] (2) at (1,0) {};
\vertex[] (3) at (2,0) {};
\vertex[] (4) at (3,0) {};
\vertex[] (5) at (4,0) {};
\vertex[] (6) at (5,0) {};
\draw[-] (1) -- (2)
 node[pos=0.5,above] {1};
\draw[-] (3) -- (2)
 node[pos=0.5,above] {0};
\draw[-] (3) -- (4)
 node[pos=0.5,above] {-1};
\draw[-] (4) -- (5)
 node[pos=0.5,above] {0};
\draw[-] (5) -- (6)
 node[pos=0.5,above] {1};
\end{tikzpicture}
\end{center}


This is due to the fact that the $(-1)$-edge can only be adjacent to $0$-edges and these $0$-edges can only be adjacent to extra $1$-edges as a $2$-edge would have to appear in a $0,2$-alternating square that would have a second $2$-edge, adjacent to the $(-1)$-edge, as represented below, a contradiction to our choice of $(-1)$-edge.

\begin{center}
\begin{tikzpicture}
\vertex[] (1) at (0,1) {};
\vertex[] (2) at (1,1) {};
\vertex[] (3) at (2,1) {};
\vertex[] (4) at (2,0) {};
\vertex[] (5) at (1,0) {};
\draw[-] (1) -- (2)
 node[pos=0.5,above] {-1};
\draw[-] (3) -- (2)
 node[pos=0.5,above] {0};
 
\draw[-] (3) -- (4)
 node[pos=0.5,right] {2};
\draw[-] (2) -- (5)
 node[pos=0.5,left] {2}
 node[draw,strike out,pos=0.45] {}
 node[draw,strike out,pos=0.55] {};
\draw[-] (5) -- (4)
 node[pos=0.5,below] {0};
\end{tikzpicture}
\end{center}


Note that the three possibilities we have just highlighted also show that $\mathcal{G}'_{-1,2}$ has at least one connected component on three vertices.

One can also note that if $\mathcal{G}'$ has no such subgraph, $\rho_0$ and $\rho_1$ commute.
But then $\Gamma\cong\langle\rho_{-1},\rho_0\rangle \times \langle\rho_1,\rho_2\rangle$, so $\Gamma$ cannot be a symmetric group, a contradiction.
For the same reason, no two consecutive generators in $(\rho_{-1},\rho_0,\rho_1,\rho_2)$ can commute.

\section{The proof of Theorem \ref{RAT1}} \label{proof}

Let us recall the following proposition.
\begin{proposition}\cite[Proposition 2E16(a)]{ARP}\label{breakinparts}
A string group generated by involutions $(G,\{\rho_0,...,\rho_{r-1}\})$ is a string C-group (i.e. satisfies the intersection property) if and only if 
\noindent  \begin{itemize}
 \item[$\bullet$] $(G_{0},\{\rho_1,...,\rho_{r-1}\})$ is a string C-group;
 \item[$\bullet$] $(G_{r-1},\{\rho_0,...,\rho_{r-2}\})$ is a string C-group, and
 \item[$\bullet$] $G_{0}\cap G_{r-1} = G_{0,r-1}:=\langle\rho_1,...,\rho_{r-2}\rangle$
\end{itemize}

\end{proposition}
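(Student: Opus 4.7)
The implication IP $\Rightarrow$ the three conditions is essentially bookkeeping. The third condition is obtained by instantiating IP at the pair of index sets $I = \{1,\ldots,r-1\}$ and $J = \{0,\ldots,r-2\}$, which yields $G_0 \cap G_{r-1} = \langle \rho_k : k \in \{1,\ldots,r-2\}\rangle = G_{0,r-1}$. The first condition holds because $(G_0,\{\rho_1,\ldots,\rho_{r-1}\})$ trivially inherits the string property from $G$, and its own IP requirement, applied to arbitrary $I', J' \subseteq \{1,\ldots,r-1\}$, is a specialization of IP in $G$ once one observes that the subgroups generated by these index sets coincide whether computed in $G$ or in $G_0$. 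The second condition is symmetric.

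\textbf{Backward direction.} For the converse, I would assume the three conditions and prove IP for $G$ by induction on the rank $r$. The base case $r = 2$ reduces to $\langle \rho_0\rangle \cap \langle \rho_1\rangle = \{1\}$, which is precisely the third condition (with $G_{0,1}$ trivial). For the inductive step, given $I, J \subseteq \{0,\ldots,r-1\}$, only the inclusion $\langle \rho_i : i \in I\rangle \cap \langle \rho_j : j \in J\rangle \subseteq \langle \rho_k : k \in I \cap J\rangle$ needs work, the reverse inclusion being automatic. My case analysis is driven by how the extremal indices $0$ and $r-1$ distribute between $I$ and $J$. When $0 \notin I \cup J$, both subgroups lie in $G_0$, and the inductive hypothesis applied to the string C-group $(G_0,\{\rho_1,\ldots,\rho_{r-1}\})$ closes this case; when $r-1 \notin I \cup J$, the argument is symmetric via $G_{r-1}$. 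When $0 \in I \setminus J$ and $r-1 \in J \setminus I$, any element $g$ of the intersection lies in $\langle \rho_j : j \in J\rangle \leq G_0$ and in $\langle \rho_i : i \in I\rangle \leq G_{r-1}$, hence in $G_0 \cap G_{r-1} = G_{0,r-1}$ by the third hypothesis; invoking IP inside each of $G_0$ and $G_{r-1}$ (both string C-groups by hypothesis) pins $g$ first into $\langle \rho_k : k \in I \setminus \{0\}\rangle$ and $\langle \rho_k : k \in J \setminus \{r-1\}\rangle$, and then one application of the inductive hypothesis to the rank-$(r-2)$ string C-group $(G_{0,r-1},\{\rho_1,\ldots,\rho_{r-2}\})$ forces $g$ into $\langle \rho_k : k \in I \cap J\rangle$.

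\textbf{Main obstacle.} The delicate case is when $0$ or $r-1$ lies in $I \cap J$, for then neither subgroup is confined to $G_0$ or $G_{r-1}$ and the third hypothesis cannot be brought in directly. My plan here is to fix a word representation $g = \rho_{i_1}\cdots \rho_{i_s}$ of an element of the intersection with each $i_\ell \in I$, and to exploit the string property $(\rho_i\rho_j)^2 = 1$ for $|i-j|\geq 2$ to migrate all occurrences of $\rho_0$ past any $\rho_i$ with $i \geq 2$, leaving a canonical form in which the $\rho_0$'s interact only with $\rho_1$'s. Since $\rho_0 \in \langle \rho_k : k \in I \cap J\rangle$ under the assumption $0 \in I \cap J$, peeling these $\rho_0$'s off reduces the problem to one lying in a strictly smaller parabolic subgroup, bringing us back to a previously handled subcase; the symmetric manoeuvre handles the case $r-1 \in I \cap J$. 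Making this rewriting rigorous, and verifying along the way that the auxiliary subgroup $G_{0,r-1}$ itself inherits the three conditions from $G_0$ and $G_{r-1}$ so that the induction can be invoked at the appropriate rank, is where the bulk of the technical effort concentrates.
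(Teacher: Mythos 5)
Note first that the paper offers no proof of this statement: it is quoted as Proposition 2E16(a) of McMullen--Schulte's book [ARP], so your attempt must stand on its own. Your forward direction is fine, and so are the backward cases you actually carry out: $0\notin I\cup J$ (work inside $G_0$), $r-1\notin I\cup J$ (inside $G_{r-1}$), and $0\in I\setminus J$, $r-1\in J\setminus I$, where $g\in G_0\cap G_{r-1}=G_{0,r-1}$ by the third hypothesis, then IP in $G_{r-1}$ and $G_0$ gives $g\in\langle\rho_k:k\in I\setminus\{0\}\rangle\cap\langle\rho_k:k\in J\setminus\{r-1\}\rangle$, and IP in $G_{0,r-1}$ finishes. (A small remark: no induction is doing any work in these cases; $G_0$, $G_{r-1}$ are C-groups by hypothesis and every parabolic of a C-group is automatically a C-group, so $G_{0,r-1}$ needs no separate verification of "the three conditions".)

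The genuine gap is precisely the case you label the main obstacle, when $0$ or $r-1$ (and in particular both) lie in $I\cap J$, and the rewriting plan you propose for it cannot be made to work. Pushing $\rho_0$ past letters $\rho_i$ with $i\geq 2$ gets blocked at every occurrence of $\rho_1$, so the $\rho_0$'s cannot be collected into a removable block; and "peeling off" $\rho_0$ because $\rho_0\in\langle\rho_k:k\in I\cap J\rangle$ does not alter the element $g$ whose membership in $\langle\rho_k:k\in I\cap J\rangle$ is at stake. More fundamentally, $G$ is only a smooth quotient of a Coxeter group: there is no normal form or exchange condition available, and the intersection property is not a consequence of the string presentation data (the paper's own Section 6 exhibits sggi's with these commutation relations that fail IP), so no purely syntactic manipulation of words can prove it -- the three hypotheses must enter the hard case. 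A workable route (essentially that of [ARP]) is group-theoretic: show every maximal parabolic $G_m$ with $0<m<r-1$ is a C-group by proving $G_m=\langle\rho_0,\ldots,\rho_{m-1}\rangle\times\langle\rho_{m+1},\ldots,\rho_{r-1}\rangle$, the triviality of the intersection of the factors coming from hypothesis three combined with IP in $G_{r-1}$ and $G_0$; deduce $G_s\cap G_p=\langle\rho_i:i\neq s,p\rangle$ for all $s\neq p$, and from that $\langle\rho_i:i\in K\rangle\cap G_m=\langle\rho_i:i\in K\setminus\{m\}\rangle$ for every $K$ and every $m$, which then settles the remaining configurations (e.g.\ $0,r-1\in I\cap J$ with $I\cup J=\{0,\ldots,r-1\}$). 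As written, your induction has nothing to apply to in that case, so the proof is incomplete.
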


We will also make use of the following lemma which is folklore.

\begin{lemma} \label{dihedral}
Any dihedral group $G$ generated by two distinct involutions $\rho_0$ and $\rho_1$ has a string C-group representation $(G,\{\rho_0,\rho_1\})$.
\end{lemma}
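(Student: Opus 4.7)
The plan is to observe that the claim is essentially a triviality: for a rank-$2$ pair of generators there is no string condition to check, and the only substantive intersection identity reduces to the two cyclic subgroups $\langle \rho_0 \rangle$ and $\langle \rho_1 \rangle$ meeting trivially, which is forced by the hypothesis that $\rho_0 \neq \rho_1$.

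First I would unwind the definition of string C-group representation in rank $r = 2$. The string property (SP) requires $(\rho_i \rho_j)^2 = 1_G$ only when $|i-j| \geq 2$; since our index set is $\{0,1\}$, there is no such pair, so (SP) is vacuous. Hence the only thing to verify is the intersection property (IP) for all $I, J \subseteq \{0,1\}$.

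Next I would enumerate the relevant pairs $(I,J)$ up to symmetry. The cases where $I \subseteq J$ (or $J \subseteq I$) give $\langle \rho_i : i \in I \rangle \cap \langle \rho_j : j \in J \rangle = \langle \rho_i : i \in I \rangle$, which matches $\langle \rho_k : k \in I \cap J \rangle$ automatically. The only nontrivial case is $I = \{0\}$, $J = \{1\}$, where (IP) demands
\[
\langle \rho_0 \rangle \cap \langle \rho_1 \rangle \;=\; \langle \rho_k : k \in \emptyset \rangle \;=\; \{1_G\}.
\]
Since each $\rho_i$ is an involution, $\langle \rho_i \rangle = \{1_G, \rho_i\}$, so the intersection is either trivial or equal to $\{1_G, \rho_0\} = \{1_G, \rho_1\}$; the latter would force $\rho_0 = \rho_1$, contradicting the distinctness hypothesis. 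Therefore the intersection is trivial and (IP) holds. This gives $(G, \{\rho_0, \rho_1\})$ as a string C-group representation. The generation condition $G = \langle \rho_0, \rho_1 \rangle$ is part of the hypothesis, so nothing else needs to be checked.

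There is no genuine obstacle here; the lemma is included only for convenient reference later in the proof of Theorem~\ref{RAT1}, where rank-$2$ residues of the form $\langle \rho_i, \rho_{i+1} \rangle$ will appear and need to be certified as string C-groups. The dihedral structure of $G$ plays no role beyond confirming that two distinct involutions generate some group; the argument works verbatim whenever (SP) is vacuous.
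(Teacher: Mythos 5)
Your proof is correct and coincides with the argument the paper has in mind (the published proof just says ``Straightforward,'' and the commented-out version in the source checks exactly the same two points: (SP) is vacuous in rank $2$, and the only nontrivial instance of (IP) is $\langle\rho_0\rangle\cap\langle\rho_1\rangle=\{1_G\}$, forced by $\rho_0\neq\rho_1$). Nothing further is needed.
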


\begin{proof}
Straightforward.\end{proof}

In order to prove that $(\Gamma,\{\rho_{-1},\rho_0,\rho_1,\rho_2\})$, the group associated to $\mathcal{G}'$, is a string C-group we first observe that $(\Gamma,\{\rho_{-1},\rho_0,\rho_1,\rho_2\})$ is a string group generated by involutions. Indeed, since we have chosen our $(-1)$-edge non-adjacent to any $2$-edge and in place of a $1$-edge, the only $(-1)$-edge in $\mathcal{G}'$ is neither adjacent to any $2$-edge nor to any  $1$-edge. Hence $\rho_{-1}$ commutes with both $\rho_1$ and $\rho_2$. Since $\rho_0$ and $\rho_2$ are unchanged, they still commute with one another.

Due to Proposition \ref{breakinparts}, we now only need to check that
\begin{itemize}
    \item $(\Gamma_1,\{\rho_0,\rho_1,\rho_2\})$ is a string C-group
    \item $(\Gamma_2,\{\rho_{-1},\rho_0,\rho_1\})$ is a string C-group
    \item $\Gamma_{-1} \cap \Gamma_{2}=\Gamma_{-1,2}$
\end{itemize}

We have shown in the previous section that $\mathcal{G}'$ must have a subgraph of the shape

\begin{center}
\begin{tikzpicture}
\vertex[] (1) at (0,0) {};
\vertex[] (2) at (1,0) {};
\vertex[] (3) at (2,0) {};
\draw[-] (1) -- (2)
node[pos=0.5,above]{0};
\draw[-] (2) -- (3)
node[pos=0.5,above] {1};
\end{tikzpicture}
\end{center}

that is, $\mathcal{G}'_{-1,2}$ must have a connected component on three vertices or, in other words, $\Gamma_{-1,2}$ must have at least one orbit of size $3$.

In fact, the action of $\Gamma_{-1,2}$ on the vertices of $\mathcal{G}'$ is the one of a dihedral group $D_6$ or $D_{12}$:


\begin{lemma}\label{-1,2}
Let $(G,\{\rho_{0},\rho_1\})$ be a string C-group representation of rank two.
Let $\mathcal{G}$ be the permutation representation graph of $G$.
If $G$  has at least one orbit of size $3$ and none of greater size then the potential $\mathcal{G}$-connected components are

\begin{center}
\begin{tikzpicture}
\vertex[] (1) at (1,0) {};

\vertex[] (2) at (2,0) {};
\vertex[] (3) at (3,0) {};
\draw[thin,double distance=2pt] (2) -- (3)
 node[pos=0.5,above] {0} node[pos=0.5,below] {1};

\vertex[] (4) at (4,0) {};
\vertex[] (5) at (5,0) {};
\draw[-] (4) -- (5)
 node[pos=0.5,above] {0};

\vertex[] (6) at (6,0) {};
\vertex[] (7) at (7,0) {};
\draw[-] (6) -- (7)
 node[pos=0.5,above] {1};

\vertex[] (8) at (8,0) {};
\vertex[] (9) at (9,0) {};
\vertex[] (10) at (10,0) {};
\draw[-] (8) -- (9)
 node[pos=0.5,above] {0};
\draw[-] (9) -- (10)
 node[pos=0.5,above] {1};
\end{tikzpicture}
\end{center}

\noindent and $G$ is isomorphic to either $S_3\cong D_6$ or $C_2\times S_3\cong D_{12}$.
\end{lemma}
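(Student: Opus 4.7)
The plan has two parts: first enumerate the possible connected components of $\mathcal{G}$ using elementary graph constraints, and then identify $G$ by computing the order of $\rho_0\rho_1$ from the component data.

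For the enumeration, I would use that $\rho_0$ and $\rho_1$ are involutions, so every vertex of $\mathcal{G}$ has at most one incident edge of each of the two labels. Consequently each connected component is a path or a cycle whose edge labels must alternate between $0$ and $1$. A cycle with alternating labels has even length, hence an even number of vertices; the only such component compatible with all orbits having size at most $3$ is the ``double edge'' on two vertices, since any longer alternating cycle already contains at least four vertices. Similarly a path of length $\ell$ contains $\ell+1$ vertices, forcing $\ell\leq 2$. Listing the remaining possibilities, and distinguishing the two choices of label on a path of length $1$, gives exactly the five pictures in the statement.

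For the identification of $G$, I would compute $\ord(\rho_0\rho_1)$ componentwise: the product acts trivially on an isolated vertex and on a double edge, as a $2$-cycle on a single $0$- or $1$-edge, and as a $3$-cycle on a path of length $2$ (where the joint action of $\rho_0$ and $\rho_1$ is the standard faithful $S_3$-action on three points). The global order is the least common multiple of these local orders, so the hypothesis that at least one orbit has size $3$ forces $3\mid\ord(\rho_0\rho_1)$, while the remaining local orders contribute only a possible factor of $2$. Hence $\ord(\rho_0\rho_1)\in\{3,6\}$. The intersection property in rank two is merely $\rho_0\neq\rho_1$, so $G$ is the full dihedral group generated by these two involutions, giving $G\cong D_6\cong S_3$ or $G\cong D_{12}\cong C_2\times S_3$. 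The only mildly subtle point is the exclusion of longer alternating paths and cycles, but this is immediate from the orbit-size bound, so I expect no serious obstacle.
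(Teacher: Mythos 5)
Your proposal is correct and fills in exactly the elementary argument the paper leaves as ``Straightforward'': the degree/label constraints force the five component shapes, and the componentwise computation of $\ord(\rho_0\rho_1)\in\{3,6\}$ identifies $G$ as $D_6$ or $D_{12}$, matching the case analysis sketched (and partly delegated to computation) in the authors' commented-out proof. No gaps.
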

\begin{proof}
Straightforward.
\end{proof}

\subsection{Proof that $\Gamma_{-1}$ is a string C-group}\label{part1}

As a subgroup of $\Gamma$, $\Gamma_{-1}$ is a string group generated by  involutions.

\noindent According to Proposition \ref{breakinparts}, it is enough to show that 
\begin{itemize}
    \item $(\Gamma_{-1,0},\{\rho_1,\rho_2\})$ is a string C-group
    \item $(\Gamma_{-1,2},\{\rho_0,\rho_1\})$
    is a string  C-group
    \item $\Gamma_{-1,0}\cap\Gamma_{-1,2}=\Gamma_{-1,0,2}:=\langle \rho_1\rangle$
\end{itemize}

The first two points are clear by Lemma \ref{dihedral}.
It remains to show that $\Gamma_{-1,0}\cap\Gamma_{-1,2}=\langle \rho_1\rangle$.

The subgroup $\Gamma_{-1,2}$ is either $D_6$ or $D_{12}$ by Lemma~\ref{-1,2}.
Hence in order to have $\Gamma_{-1,2}\cap \Gamma_{-1,0}>\langle \rho_1 \rangle$, we need $\Gamma_{-1,2}$ to be $D_{12}$ (for otherwise $\Gamma_{-1,2}$ is isomorphic to $D_6$ and therefore, since $\langle\rho_1\rangle$ is maximal  in $\Gamma_{-1,2}$,
$\Gamma_{-1,2} = \Gamma_{-1,2}\cap \Gamma_{-1,0}$ which is clearly impossible).
Now, $\mathcal{G'}_{-1,2}$ has at least one connected component as the following one (the one connected to the ($-1$)-edge in $\mathcal G'$). 
\begin{center}
\begin{tikzpicture}
\vertex[] (1) at (0,0) {};
\vertex[] (2) at (1,0) {};
\vertex[] (3) at (2,0) {};
\node[left=0.3cm of 1,label=left:$-1$] (4) {};
\draw[-] (1) -- (2)
node[pos=0.5,above]{0};
\draw[-] (2) -- (3)
node[pos=0.5,above] {1};
\draw[dashed] (1) -- (4);

\end{tikzpicture}
\end{center}
Moreover, 
$\rho_1$ does not commute with $\rho_0$ nor with $\rho_2$, hence
$\rho_1$ is not in the centre of $\Gamma_{-1,2}$ or $\Gamma_{-1,0}$ so that $\Gamma_{-1,2}\cap \Gamma_{-1,0}$ must contain a rotation common to both dihedral groups: we must have
$(\rho_0\rho_1)^k = (\rho_1\rho_2)^l\neq 1_\Gamma$ for some integers $k$ and $l$.
The orbit above has to be fixed pointwise by $(\rho_0\rho_1)^k$ as $(\rho_1\rho_2)^l$ fixes the leftmost vertex.
Hence $k = 3$ and since $(\rho_0\rho_1)^3\neq 1_\Gamma$, we must also have a 1-edge or a 0-edge as one of the connected components in $\mathcal G'_{-1,2}$ with its vertices swapped by $(\rho_0\rho_1)^3$.
Suppose we have a 0-edge.
It must be connected to the rest of the graph $\mathcal G'$ by an alternating square with 2-edges, in turn connected to the rest of $\mathcal G'$ by a $1$-edge, as represented below.
Hence, the $2$-edges in the square connect the 0-edge to an orbit of size 3 in $\mathcal G'_{-1,2}$, fixed pointwise by $(\rho_0\rho_1)^3 = (\rho_1\rho_2)^l$.

\begin{center}
\begin{tikzpicture}
\vertex[] (1) at (3,1) {};
\vertex[] (2) at (1,1) {};
\vertex[] (3) at (2,1) {};
\vertex[fill=black] (4) at (2,0) {};
\vertex[fill=black] (5) at (1,0) {};
\draw[-] (1) -- (3)
 node[pos=0.5,above] {1};
\draw[-] (3) -- (2)
 node[pos=0.5,above] {0};
\draw[-] (3) -- (4)
 node[pos=0.5,right] {2};
\draw[-] (2) -- (5)
 node[pos=0.5,left] {2};
\draw[-] (5) -- (4)
 node[pos=0.5,below] {0};
\end{tikzpicture}
\end{center}

Now the vertices of the 0-edge have to be fixed pointwise by $(\rho_1\rho_2)^l$ (as it either switches the two leftmost vertices or fixes them) and thus also by $(\rho_0\rho_1)^3$.
So we need a connected component of $\mathcal G'_{-1,2}$ to be a 1-edge $e_1$.
Now, as represented below, $e_1$ must again be connected to the rest of $\mathcal G'$, this time by a path $P$ of alternating 1- and 2-edges, connecting the 1-edge to an orbit $O$ of length 3 of $\mathcal G'_{-1,2}$ that is fixed pointwise by $(\rho_0\rho_1)^3$.

\begin{center}
\begin{tikzpicture}
\vertex[] (1) at (0,1) {};
\vertex[] (2) at (1,1) {};
\vertex[] (3) at (2,1) {};
\vertex[] (4) at (3,1) {};
\vertex[] (5) at (4,1) {};
\vertex[] (6) at (5,1) {};
\draw[-] (1) -- (2)
node[pos=0.5,above]{1};
\draw[-] (2) -- (3)
node[pos=0.5,above] {2};
\draw[dashed] (4) -- (3);
\draw[-] (4) -- (5)
node[pos=0.5,above] {1};
\draw[-] (6) -- (5)
node[pos=0.5,above] {0};
\end{tikzpicture}
\begin{tikzpicture}
\vertex[] (1) at (0,1) {};
\vertex[] (2) at (1,1) {};
\vertex[] (3) at (2,1) {};
\vertex[] (4) at (3,1) {};
\vertex[] (5) at (4,1) {};
\vertex[] (6) at (5,1) {};
\vertex[] (7) at (6,1) {};
\vertex[] (8) at (5,0) {};
\draw[-] (1) -- (2)
node[pos=0.5,above] {1};
\draw[-] (2) -- (3)
node[pos=0.5,above] {2};
\draw[dashed] (4) -- (3);
\draw[-] (4) -- (5)
node[pos=0.5,above] {1};
\draw[-] (6) -- (5)
node[pos=0.5,above] {2};
\draw[-] (6) -- (7)
node[pos=0.5,above] {1};
\draw[-] (8) -- (7)
node[pos=0.5,below,xshift=0.15cm] {2};
\draw[-] (8) -- (5)
node[pos=0.5,below,xshift=-0.15cm] {0};
\end{tikzpicture}
\end{center}

The path $P$ corresponds to an orbit of $\Gamma_{-1,0}$. This orbit has one point fixed by $(\rho_0\rho_1)^3 = (\rho_1\rho_2)^l$, namely the point corresponding to the vertex of $O$ fixed by $\rho_0$.
This forces the vertices of the whole path $P$ to be fixed pointwise by $(\rho_1\rho_2)^k$ and in particular, the vertices of $e_1$ must be fixed pointwise, a contradiction.

This completes the proof that $\Gamma_{-1,0}\cap\Gamma_{-1,2}=\Gamma_{-1,0,2}=\langle \rho_1\rangle$.

\subsection{Proof that $\Gamma_{2}$ is a string C-group}\label{part2}

As a subgroup of $\Gamma$, $\Gamma_{2}$ is a string group generated by  involutions.

\noindent According to Proposition \ref{breakinparts}, it is enough to show that 
\begin{itemize}
    \item $(\Gamma_{-1,2},\{\rho_0,\rho_1\})$
    is a string C-group
    \item $(\Gamma_{1,2},\{\rho_{-1},\rho_0\})$
    is a string C-group
    \item $\Gamma_{-1,2}\cap\Gamma_{1,2}=\Gamma_{-1,1,2}=\langle \rho_0\rangle$
\end{itemize}

The first two points are clear by Lemma \ref{dihedral}.
It remains to show that $\Gamma_{-1,2}\cap\Gamma_{1,2}=\langle \rho_0\rangle$.

As in the previous proof, the subgroup $\Gamma_{-1,2}$ is either $D_6$ or $D_{12}$ by Lemma~\ref{-1,2}.
Hence in order to have $\Gamma_{-1,2}\cap \Gamma_{1,2}>\langle \rho_0 \rangle$, we need $\Gamma_{-1,2}$ to be $D_{12}$ (for otherwise $\Gamma_{-1,2}$ is isomorphic to $D_6$ and therefore, since $\langle\rho_0\rangle$ is maximal  in $\Gamma_{-1,2}$, $\Gamma_{-1,2} = \Gamma_{-1,2}\cap \Gamma_{1,2}$ which is clearly impossible).
Now, $\mathcal{G'}_{-1,2}$ has at least one connected component as the following one (the one connected to the ($-1$)-edge in $\mathcal G'$). 
\begin{center}
\begin{tikzpicture}
\vertex[] (1) at (0,0) {};
\vertex[] (2) at (1,0) {};
\vertex[] (3) at (2,0) {};
\node[left=0.3cm of 1,label=left:$-1$] (4) {};
\draw[-] (1) -- (2)
node[pos=0.5,above]{0};
\draw[-] (2) -- (3)
node[pos=0.5,above] {1};
\draw[dashed] (1) -- (4);
\end{tikzpicture}
\end{center}
Moreover, 
$\rho_0$ does not commute with $\rho_{-1}$ nor with $\rho_1$, hence
$\rho_0$ is not in the centre of $\Gamma_{-1,2}$ or $\Gamma_{1,2}$ so that $\Gamma_{-1,2}\cap\Gamma_{1,2}$ must contain a rotation common to both dihedral groups: we must have
$(\rho_0\rho_1)^k = (\rho_{-1}\rho_0)^l\neq 1_\Gamma$ for some integers $k$ and $l$.
The orbit above has to be fixed pointwise by $(\rho_0\rho_1)^k$ as $(\rho_{-1}\rho_0)^l$ fixes the rightmost vertex.
Hence $k = 3$ and since $(\rho_0\rho_1)^3\neq 1_\Gamma$, we must also have a 1-edge or a 0-edge as one of the connected components in $\mathcal G'_{-1,2}$ with its vertices swapped by $(\rho_0\rho_1)^3$.
Suppose we have a 0-edge.
It must be connected to the rest of the graph $\mathcal G'$ by an alternating square with 2-edges, in turn connected to the rest of $\mathcal G'$ by a $1$-edge, as represented below. Hence the 2-edges in the square connect the 0-edge to an orbit of size 3 in $\mathcal G'_{-1,2}$, fixed pointwise by $(\rho_0\rho_1)^3 = (\rho_{-1}\rho_0)^l$.

\begin{center}
\begin{tikzpicture}
\vertex[] (1) at (3,1) {};
\vertex[] (2) at (1,1) {};
\vertex[] (3) at (2,1) {};
\vertex[fill=black] (4) at (2,0) {};
\vertex[fill=black] (5) at (1,0) {};
\draw[-] (1) -- (3)
 node[pos=0.5,above] {1};
\draw[-] (3) -- (2)
 node[pos=0.5,above] {0};
\draw[-] (3) -- (4)
 node[pos=0.5,right] {2};
\draw[-] (2) -- (5)
 node[pos=0.5,left] {2};
\draw[-] (5) -- (4)
 node[pos=0.5,below] {0};
\end{tikzpicture}
\end{center}

Hence the vertices of the 0-edge must be fixed pointwise by $(\rho_{-1}\rho_0)^l$ (as it either switches the two leftmost vertices or fixes them) and so also by $(\rho_0\rho_1)^3$.
So we need a connected component of $\mathcal G'_{-1,2}$ to be a 1-edge $e_1$.
The vertices of $e_1$ are fixed pointwise by $(\rho_{-1}\rho_0)^l$, hence they must also be fixed pointwise by $(\rho_0\rho_1)^3$, a contradiction.

This completes the proof that $\Gamma_{-1,2}\cap\Gamma_{1,2}=\Gamma_{-1,1,2}=\langle \rho_0\rangle$.

\subsection{Proof that $\Gamma_{-1} \cap \Gamma_{2}=\Gamma_{-1,2}$}\label{part3}

We first show a lemma needed in the proof.

\begin{lemma} \label{imprim}
    In the settings of Theorem \ref{RAT1} (without taking hypotheses $(1)$ to $(3)$), if $\Gamma_{-1}$ acts imprimitively on one of its orbits $O$ then
     
    \begin{itemize}
    \item any complete system of imprimitivity partitions any triplet of elements forming a $\Gamma_{-1,2}$-orbit of size $3$, and
    
    \item $O$ has another $\Gamma_{-1,2}$-suborbit that is partitioned by the same blocks as $O_{-1}$
    \end{itemize}
where $O_{-1}\subseteq O$ is the $\Gamma_{-1,2}$-orbit represented by the $\mathcal G'_{-1,2}$-connected component adjacent to the $(-1)$-edge.
\end{lemma}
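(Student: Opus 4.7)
The plan is to handle the two items in turn, exploiting that $\Gamma_{-1,2}$ acts as $S_3$ on any of its size-$3$ orbits and that block systems restrict to invariant partitions on such orbits.

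First, I would note that any $\Gamma_{-1,2}$-orbit $X$ of size $3$ inside $O$ is represented in $\mathcal{G}'_{-1,2}$ by a path $\circ - 0 - \circ - 1 - \circ$, so $\rho_0$ and $\rho_1$ restrict to two distinct transpositions and generate $S_3$ on $X$. Any complete block system $\mathcal{B}$ of $\Gamma_{-1}$ on $O$ induces on $X$ a $\Gamma_{-1,2}$-invariant partition: if $x,y$ lie in the same block $B$, then $gx,gy$ lie in $gB$ for every $g\in\Gamma_{-1,2}\leq\Gamma_{-1}$. Since the only $S_3$-invariant partitions of a $3$-set are the trivial one and the partition into three singletons, proving the first item reduces to ruling out the former.

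To that end, I would suppose for contradiction that $X\subseteq B$ for some block $B$. Then $B$ is $\Gamma_{-1,2}$-stable and therefore a union of $\Gamma_{-1,2}$-orbits of sizes in $\{1,2,3\}$. I would next analyse the $0,2$-local structure around $X$: since $\rho_0$ and $\rho_2$ commute, Lemma~\ref{cprlem} forces each $0,2$-connected component to be a fixed vertex, a single edge, a double edge, or an alternating square, and these shapes, combined with the upper bound of $3$ on the $\Gamma_{-1,2}$-orbit sizes, tightly constrain the possible images of the three vertices of $X$ under $\rho_2$. I expect to deduce through this case analysis that $\rho_2$ must also stabilise $B$, hence $B$ is $\Gamma_{-1}$-stable and, by transitivity, equal to $O$, contradicting the non-triviality of $\mathcal{B}$. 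The main obstacle will be managing this case analysis cleanly without getting lost in sub-cases for the various $0,2$-shapes around the three vertices of $X$.

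For the second item, once $O_{-1}=\{a,b,c\}$ has been split into three singletons by blocks $B_1,B_2,B_3$, I would consider the set $\rho_2(O_{-1})$. Because $\rho_0\rho_2=\rho_2\rho_0$, the involution $\rho_0$ acts on $\rho_2(O_{-1})$ with the same cycle structure it has on $O_{-1}$; combined with the size-$3$ bound on $\Gamma_{-1,2}$-orbits and the explicit shapes of the connected component of $\mathcal{G}'$ containing the $(-1)$-edge, this forces $\rho_2(O_{-1})$ to be either $O_{-1}$ itself or a distinct $\Gamma_{-1,2}$-orbit of size $3$. I would rule out the coincidence case by inspecting the three possible shapes around the $(-1)$-edge listed in the preceding section, where the $2$-edges incident to $a,b,c$ cannot all fix $O_{-1}$ setwise without violating the structural constraints (in particular, the fact that the $(-1)$-edge forces certain vertices of $O$ to have no incident $2$-edge). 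Tracking the $\rho_2$-action on $B_1,B_2,B_3$ via the commutation $\rho_0\rho_2=\rho_2\rho_0$, I would then conclude that the three elements of $\rho_2(O_{-1})$ sit one each in $B_1,B_2,B_3$, producing the desired second $\Gamma_{-1,2}$-suborbit $O'$. The delicate point will be verifying that $\rho_2$ permutes $\{B_1,B_2,B_3\}$ within itself rather than sending any $B_i$ to a block outside this triple, which I expect to follow from the same commutation and orbit-size constraints.
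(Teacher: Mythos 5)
Your reduction of the first item via $S_3$-primitivity is fine and essentially parallels the paper, but the step you expect to close it — deducing from the local $0,2$-analysis that $\rho_2$ stabilises the block $B$ containing the triple — would fail. If the triple $\{a,b,c\}\subseteq B$, the locally consistent alternative is that $\rho_2$ sends $B$ to a second block $B'$: then none of $a,b,c$ is fixed by $\rho_2$, each acquires a $2$-edge, you get the $0,2$-alternating square plus one extra $2$-edge, and $\rho_1$ fixes one of the two new vertices, so $\rho_0,\rho_1$ preserve both $B$ and $B'$ while $\rho_2$ swaps them; nothing local contradicts this, and one only gets $O=B\cup B'$. The contradiction in the paper comes from a global fact you never invoke: the vertex of $O$ incident to the $(-1)$-edge is fixed by $\rho_2$ (no $2$-edge may be adjacent to the $(-1)$-edge), whereas $\rho_2$ has no fixed point on $B\cup B'=O$. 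Without this extra argument the case ``$\rho_2$ moves $B$'' remains open, so your first item is incomplete rather than wrong in conclusion.

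The second item is where the approach genuinely breaks. Your dichotomy ``$\rho_2(O_{-1})$ is either $O_{-1}$ or a distinct $\Gamma_{-1,2}$-orbit of size $3$'' cannot hold: since the $(-1)$-edge is adjacent to no $2$-edge, $\rho_2$ fixes the vertex $a$ of $O_{-1}$ incident to it, so $a\in\rho_2(O_{-1})$ always, and a set meeting $O_{-1}$ can never be a \emph{distinct} $\Gamma_{-1,2}$-orbit (orbits are disjoint); moreover $\rho_2$ does not normalise $\langle\rho_0,\rho_1\rangle$, so $\rho_2(O_{-1})$ need not be an orbit at all (e.g.\ a $2$-edge at the endpoint $c$ gives $\rho_2(O_{-1})=\{a,b,\rho_2(c)\}$, which meets two different orbits). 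The mechanism that actually produces the second suborbit is different: one uses non-triviality of the block system to get a \emph{second} element $a'$ in the block of $a$; since $\rho_0$ sends that block to the block of $b$, $a'$ carries a $0$-edge to some $b'$ in $b$'s block; a $1$-edge at $a'$ is excluded by the first item, and to force a $1$-edge at $b'$ (rather than $\{a',b'\}$ being a bare $0$-edge or a $0,1$-double edge component of $\mathcal{G}'_{-1,2}$) the paper needs a separate structural argument — the count of leaves and of $0,2$-alternating squares attached by a single $1$-edge in $\mathcal{G}'_O$ — which rules those component shapes out. None of this appears in your plan, so the second bullet is not reachable along the route you propose.
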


\begin{proof}
Let us denote the subgraph of $\mathcal{G}'$ whose vertices represent $O$ by $\mathcal{G}'_O$.

Note that one cannot have a non-trivial complete system of imprimitivity on less than $4$ elements (since we need at least two blocks containing at least two elements). Hence $\mathcal{G}'_O$ has at least $4$ vertices, three of which represent $O_{-1}$.



Let $S_I$ be a non-trivial complete system of imprimitivity for $\Gamma_{-1}\curvearrowright O$. Since the vertices of $\mathcal{G}'_O$ represent the elements of $O$, throughout, we will assign colours to the vertices in $\mathcal{G}'_O$ to embody the belonging of the corresponding element of $O$ to a given block.


Let us now show that the vertices of any $\mathcal{G}'_{-1,2}$-connected component on $3$ vertices in $\mathcal{G}'_O$ will be partitioned by $S_I$.
Let us consider such a component and give names to its vertices as in the picture below.

\begin{center}
\begin{tikzpicture}
\vertex[label=below:$a$] (1) at (0,0) {};
\vertex[label=below:$b$] (2) at (1,0) {};
\vertex[label=below:$c$] (3) at (2,0) {};
\draw[-] (1) -- (2)
 node[pos=0.5,above] {0};
\draw[-] (3) -- (2)
 node[pos=0.5,above] {1};
\end{tikzpicture}
\end{center}

Note that for orbit length reasons, there cannot be any $1$-edge connected to $a$ or any $0$-edge connected to $c$. 

If $b$ belongs to a first block, the blue one say, then either $a$ and $c$ are both blue or neither of them is. Indeed, if $a$ (respectively $c$) is blue then $\rho_1$  (resp. $\rho_0$) fixes the blue block so, as the image of $b$ by $\rho_1$ (resp. $\rho_0$), $c$ (resp. $a$) is blue.

Suppose that $a$, $b$ and $c$ are all in the blue block.
This $\mathcal{G}'_{-1,2}$-connected component is not the whole of $\mathcal{G}'_O$ since $\mathcal{G}'_O$ has at least $4$ vertices. The only way to connect these three vertices to the rest of $\mathcal{G}'_O$ is by adding a $2$-edge between one of the three vertices and a new, fourth vertex, say $d$. If $d$ is blue then $\rho_0$, $\rho_1$ and $\rho_2$ all fix the blue block and the whole of $\mathcal{G}'_O$ forms a unique block, a contradiction with the assumption that $S_I$ is non-trivial.
Hence $d$ must belong to a new block, the red one say. Since $\rho_2$ maps elements of the blue block to elements of the red one, $\rho_2$ cannot fix any of our initial three vertices or map them to one another. Hence we must have a $2$-edge in all three, joining each of them to a new one. Since a $2$-edge adjacent to a $0$-edge always leads to an alternating square, we have 

\begin{center}
\begin{tikzpicture}
\vertex[label=above:$a$,color=blue] (1) at (0,1) {};
\vertex[label=above:$b$,color=blue] (2) at (1,1) {};
\vertex[label=above:$c$,color=blue] (3) at (2,1) {};
\vertex[label=below:$e$,color=red] (4) at (0,0) {};
\vertex[label=below:$f$,color=red] (5) at (1,0) {};
\vertex[label=above:$d$,color=red] (6) at (3,1) {};
\draw[-] (1) -- (2)
 node[pos=0.5,above] {0};
\draw[-] (3) -- (2)
 node[pos=0.5,above] {1};
\draw[-] (1) -- (4)
 node[pos=0.5,left] {2};
\draw[-] (5) -- (4)
 node[pos=0.5,below] {0};
\draw[-] (2) -- (5)
 node[pos=0.5,right] {2};
\draw[-] (3) -- (6)
 node[pos=0.5,above] {2};
\end{tikzpicture}
\end{center}

Now, in order to avoid forming a $\mathcal{G}'_{-1,2}$-connected component on more than $3$ vertices, there can only be a $1$-edge incident to one vertex among $e$ and $f$. Thus $\rho_1$ fixes one of $e$ and $f$ and hence the red block.

From this, we can conclude that all vertices of $\mathcal{G}'_O$ are either blue or red. Indeed, we have full knowledge of the action of $\rho_0$, $\rho_1$ and $\rho_2$ on our two blocks.

In particular, the vertex through which $\mathcal{G}'_O$ is connected to the unique $(-1)$-edge must be either red or blue. This leads to a contradiction since no $2$-edge can be adjacent to the $(-1)$-edge and hence this vertex must  be fixed by  $\rho_2$ while $\rho_2$ is not supposed to fix either block.

Hence $a$ and $c$ cannot belong to the same block as $b$. But they also cannot both be in the same new block since, as we have already mentioned, $\rho_1$ fixes $a$ and maps $c$ to an element of the blue block.
Therefore $a$, $b$ and $c$ belong to three different blocks, as required.

This concludes the proof of the first part of this Lemma.


Now, in order to prove the second part, we first show that $\mathcal{G}'_O$ cannot contain any $\mathcal{G}'_{-1,2}$-connected component in the form of a double $0,1$-edge or a single $0$-edge.

Let us start with an observation on the number of leaves and $0,2$-alternating squares connected to exactly one $1$-edge that $\mathcal{G}'_O$ can posses.

It is clear that $\mathcal{G}'_O$ has at least one leaf since one of its vertices has to be incident to our $(-1)$-edge and can thus only have one edge in $\mathcal{G}'_O$, i.e. a $0$-edge (since neither a $1$-edge nor a  $2$-edge can be adjacent to the $(-1)$-edge).

The vertices of $\mathcal{G}'_O$ cannot have a degree higher than $3$ since $\mathcal{G}'_O$ has edges of labels $0$, $1$ and $2$ only. Here the

If we have a vertex with $3$ neighbours then it is adjacent to one edge of each label and to no multiple edge. In  particular, it forces a $0$-edge adjacent to a $2$-edge and thus must be one of the corners of a $0,2$-alternating square.

\begin{center}
\begin{tikzpicture}
\vertex[fill=black] (1) at (1,1) {};
\vertex[fill=black] (2) at (2,1) {};
\vertex[] (3) at (3,1) {};
\vertex[] (4) at (1,0) {};
\vertex[] (5) at (2,0) {};
\draw[-] (1) -- (2)
 node[pos=0.5,above] {0};
\draw[-] (3) -- (2)
 node[pos=0.5,above] {1};

\draw[-] (1) -- (4)
 node[pos=0.5,left] {2};
\draw[-] (2) -- (5)
 node[pos=0.5,right] {2};
\draw[-] (5) -- (4)
 node[pos=0.5,below] {0};
\end{tikzpicture}
\end{center}

We cannot have any $1$-edge in the top left vertex as such an edge would be part of a $\mathcal{G}'_{-1,2}$-connected component on more than $3$ vertices.

For the same kind of reason, there can be a $1$-edge in one of the bottom vertices but not in both unless its connecting them. So we have one of the following.

\begin{center}
\begin{tikzpicture}
\vertex[fill=black] (1) at (1,1) {};
\vertex[fill=black] (2) at (2,1) {};
\vertex[] (3) at (3,1) {};
\vertex[fill=black] (4) at (1,0) {};
\vertex[fill=black] (5) at (2,0) {};
\draw[-] (1) -- (2)
 node[pos=0.5,above] {0};
\draw[-] (3) -- (2)
 node[pos=0.5,above] {1};

\draw[-] (1) -- (4)
 node[pos=0.5,left] {2};
\draw[-] (2) -- (5)
 node[pos=0.5,right] {2};
\draw[-] (5) -- (4)
 node[pos=0.5,below] {0};
\end{tikzpicture}
\begin{tikzpicture}
\vertex[fill=black] (1) at (1,1) {};
\vertex[fill=black] (2) at (2,1) {};
\vertex[] (3) at (3,1) {};
\vertex[fill=black] (4) at (1,0) {};
\vertex[fill=black] (5) at (2,0) {};
\draw[-] (1) -- (2)
 node[pos=0.5,above] {0};
\draw[-] (3) -- (2)
 node[pos=0.5,above] {1};

\draw[-] (1) -- (4)
 node[pos=0.5,left] {2};
\draw[-] (2) -- (5)
 node[pos=0.5,right] {2};
\draw[thin,double distance=2pt] (5) -- (4)
 node[pos=0.5,above] {0} node[pos=0.5,below] {1};
\end{tikzpicture}
\begin{tikzpicture}
\vertex[fill=black] (1) at (1,1) {};
\vertex[fill=black] (2) at (2,1) {};
\vertex[] (3) at (3,1) {};
\vertex[fill=black] (4) at (1,0) {};
\vertex[fill=black] (5) at (2,0) {};
\vertex[] (6) at (0,0) {};
\draw[-] (1) -- (2)
 node[pos=0.5,above] {0};
\draw[-] (3) -- (2)
 node[pos=0.5,above] {1};

\draw[-] (1) -- (4)
 node[pos=0.5,left] {2};
\draw[-] (2) -- (5)
 node[pos=0.5,right] {2};
\draw[-] (5) -- (4)
 node[pos=0.5,below] {0};
 \draw[-] (4) -- (6)
 node[pos=0.5,below] {1};
\end{tikzpicture}
\begin{tikzpicture}
\vertex[fill=black] (1) at (1,1) {};
\vertex[fill=black] (2) at (2,1) {};
\vertex[] (3) at (3,1) {};
\vertex[fill=black] (4) at (1,0) {};
\vertex[fill=black] (5) at (2,0) {};`
\vertex[] (6) at (3,0) {};`
\draw[-] (1) -- (2)
 node[pos=0.5,above] {0};
\draw[-] (3) -- (2)
 node[pos=0.5,above] {1};

\draw[-] (1) -- (4)
 node[pos=0.5,left] {2};
\draw[-] (2) -- (5)
 node[pos=0.5,right] {2};
\draw[-] (5) -- (4)
 node[pos=0.5,below] {0};
 \draw[-] (5) -- (6)
 node[pos=0.5,below] {1};
\end{tikzpicture}
\end{center}

In any case, contracting all edges in all $0,2$-alternating squares in $\mathcal{G}'_O$ gives a graph (that is not a valid permutation representation graph for any group anymore) with at least one leaf and vertices with at most $2$ neighbours and hence a path, with exactly two leaves. When we have a square as in one of the first two subgraphs above in $\mathcal{G}'_O$ then one of these two leaves comes from the contraction of a square and not from a leaf in $\mathcal{G}'_O$.

From this, it is clear that $\mathcal{G}'_O$ has at most $2$ leaves and, when it has only one, has exactly one subgraph isomorphic to one of the first two of the four above. This concludes our observation.


Suppose for a contradiction we have a $0,1$-double edge.

\begin{center}
\begin{tikzpicture}
\vertex[label=above:$a$] (1) at (0,0) {};
\vertex[label=above:$b$] (2) at (1,0) {};
\draw[thin,double distance=2pt] (1) -- (2)
 node[pos=0.5,above] {0} node[pos=0.5,below] {1};
\end{tikzpicture}
\end{center}

The only way to connect this subgraph to the rest of $\mathcal{G}'_O$ is as follows.

\begin{center}
\begin{tikzpicture}
\vertex[label=above:$a$] (1) at (0,1) {};
\vertex[label=above:$b$] (2) at (1,1) {};
\vertex[] (3) at (0,0) {};
\vertex[] (4) at (1,0) {};
\vertex[] (5) at (2,0) {};
\draw[thin,double distance=2pt] (1) -- (2)
 node[pos=0.5,above] {0} node[pos=0.5,below] {1};
\draw[-] (1) -- (3)
 node[pos=0.5,left] {2};
\draw[-] (2) -- (4)
 node[pos=0.5,right] {2};
\draw[-] (3) -- (4)
 node[pos=0.5,below] {0};
\draw[-] (5) -- (4)
 node[pos=0.5,below] {1};
\end{tikzpicture}
\end{center}

Now, since they form a single $\Gamma_{-1,2}$-orbit, the bottom three vertices must belong to three different blocks, as we've shown above.

\begin{center}
\begin{tikzpicture}
\vertex[label=above:$a$] (1) at (0,1) {};
\vertex[label=above:$b$] (2) at (1,1) {};
\vertex[color=red] (3) at (0,0) {};
\vertex[color=blue] (4) at (1,0) {};
\vertex[color=green] (5) at (2,0) {};
\draw[thin,double distance=2pt] (1) -- (2)
 node[pos=0.5,above] {0} node[pos=0.5,below] {1};
\draw[-] (1) -- (3)
 node[pos=0.5,left] {2};
\draw[-] (2) -- (4)
 node[pos=0.5,right] {2};
\draw[-] (3) -- (4)
 node[pos=0.5,below] {0};
\draw[-] (5) -- (4)
 node[pos=0.5,below] {1};
\end{tikzpicture}
\end{center}

Now $a$ and $b$ cannot be blue. Indeed, elements of the blue block are mapped to elements of different blocks (i.e. the red and the green one) by $\rho_0$ and $\rho_1$. They also cannot be green (respectively red) since they are mutual images by $\rho_1$ (resp. $\rho_0$) and hence, if one is green (resp. red), the other is blue.
Finally, they must belong to two different blocks because they are mapped on elements of different blocks by $\rho_2$.

Hence we have

\begin{center}
\begin{tikzpicture}
\vertex[label=above:$a$,color=violet] (1) at (0,1) {};
\vertex[label=above:$b$,color=orange] (2) at (1,1) {};
\vertex[color=red] (3) at (0,0) {};
\vertex[color=blue] (4) at (1,0) {};
\vertex[color=green] (5) at (2,0) {};
\draw[thin,double distance=2pt] (1) -- (2)
 node[pos=0.5,above] {0} node[pos=0.5,below] {1};
\draw[-] (1) -- (3)
 node[pos=0.5,left] {2};
\draw[-] (2) -- (4)
 node[pos=0.5,right] {2};
\draw[-] (3) -- (4)
 node[pos=0.5,below] {0};
\draw[-] (5) -- (4)
 node[pos=0.5,below] {1};
\end{tikzpicture}
\end{center}

For the blocks we have started to form to be non-trivial, we need to have another blue vertex (all blocks must have same size so they are either all trivial or all contain at least two elements). 

Since elements of the blue block are mapped to elements of three different blocks by the three generators, the blue vertex must be of degree $3$.

\begin{center}
\begin{tikzpicture}
\vertex[color=orange] (2) at (1,1) {};
\vertex[color=red] (3) at (0,0) {};
\vertex[color=blue] (4) at (1,0) {};
\vertex[color=green] (5) at (2,0) {};
\draw[-] (2) -- (4)
 node[pos=0.5,right] {2};
\draw[-] (3) -- (4)
 node[pos=0.5,below] {0};
\draw[-] (5) -- (4)
 node[pos=0.5,below] {1};
\end{tikzpicture}
\end{center}

Since we have a $0$-edge adjacent to a $2$-edge, we have a $0,2$-alternating square by Lemma \ref{cprlem}. The new vertex must be purple since it represents the image of an orange vertex by $\rho_0$.

\begin{center}
\begin{tikzpicture}
\vertex[label=above:$a'$,color=violet] (1) at (0,1) {};
\vertex[label=above:$b'$,color=orange] (2) at (1,1) {};
\vertex[color=red] (3) at (0,0) {};
\vertex[color=blue] (4) at (1,0) {};
\vertex[color=green] (5) at (2,0) {};
\draw[-] (1) -- (2)
 node[pos=0.5,above] {0};
\draw[-] (1) -- (3)
 node[pos=0.5,left] {2};
\draw[-] (2) -- (4)
 node[pos=0.5,right] {2};
\draw[-] (3) -- (4)
 node[pos=0.5,below] {0};
\draw[-] (5) -- (4)
 node[pos=0.5,below] {1};
\end{tikzpicture}
\end{center}

For orbit length reasons, there cannot be any $1$-edge in the bottom left vertex. There also cannot be a $1$-edge in both $a'$ and $b'$ unless it is between the two so one of the purple or the orange block is fixed by $\rho_1$. 
This is a contradiction since $\rho_1$ maps the elements of these two blocks to one another. 
Hence there is a double $0,1$-edge between $a'$ and $b'$ and, in particular, no $1$-edge leaves the square in $a'$ or $b'$. We obtain the required contradiction since $\mathcal{G}'_O$ cannot have more than one $0,2$-alternating square connected to the rest by a unique $1$-edge.

With very similar arguments, we can prove that $\mathcal{G}'_O$ does not contain a $\mathcal{G}'_{-1,2}$-connected component consisting of a single $0$-edge. Indeed, such an edge must be part of a $0,2$-alternating square so we find ourselves in a situation very similar to the above.


We have thus shown that the three vertices of the $\mathcal{G}'_{-1,2}$-connected component that is adjacent to our $(-1)$-edge must be partitioned by the blocks in $S_I$. 

\begin{center}
\begin{tikzpicture}
\vertex[] (1) at (0,0) {};
\vertex[label=below:$a$,color=red] (2) at (1,0) {};
\vertex[label=below:$b$,color=blue] (3) at (2,0) {};
\vertex[label=below:$c$,color=green] (4) at (3,0) {};
\draw[-] (1) -- (2)
 node[pos=0.5,above] {-1};
\draw[-] (3) -- (2)
 node[pos=0.5,above] {0};`
 \draw[-] (3) -- (4)
 node[pos=0.5,above] {1};
\end{tikzpicture}
\end{center}

Since our blocks are non-trivial, the red one must have another element, $a'$ say.

Since $\rho_0$ maps $a$ to $b$, in another block, it must map $a'$ to an element of the blue block as well. Therefore the $a'$ vertex is incident to a $0$-edge. The new vertex, $b'$ say, has to be blue.

\begin{center}
\begin{tikzpicture}
\vertex[label=below:$a'$,color=red] (2) at (1,0) {};
\vertex[label=below:$b'$,color=blue] (3) at (2,0) {};

\draw[-] (3) -- (2)
 node[pos=0.5,above] {0};
\end{tikzpicture}
\end{center}

Now we have shown that $\mathcal{G}'_O$ has no $\mathcal{G}'_{-1,2}$-connected component consisting of a single $0$-edge or a double $0,1$-edge. Hence there is a $1$-edge in $a'$ or $b'$ (but not both, for orbit length reasons).

If it is in $a'$, we get a contradiction because $\rho_1$ fixes $a$ and hence all elements of the red block so that we have the following.

\begin{center}
\begin{tikzpicture}
\vertex[color=red] (1) at (0,0) {};
\vertex[label=below:$a'$,color=red] (2) at (1,0) {};
\vertex[label=below:$b'$,color=blue] (3) at (2,0) {};
\draw[-] (1) -- (2)
 node[pos=0.5,above] {1};
\draw[-] (3) -- (2)
 node[pos=0.5,above] {0};
\end{tikzpicture}
\end{center}

This is impossible by the first part of the Lemma.

Therefore, we have the following subgraph, as required.

\begin{center}
\begin{tikzpicture}
\vertex[color=green] (1) at (3,0) {};
\vertex[label=below:$a'$,color=red] (2) at (1,0) {};
\vertex[label=below:$b'$,color=blue] (3) at (2,0) {};
\draw[-] (1) -- (3)
 node[pos=0.5,above] {1};
\draw[-] (3) -- (2)
 node[pos=0.5,above] {0};
\end{tikzpicture}
\end{center}

\end{proof}

We now come back to the last part of the proof of Theorem~\ref{RAT1}.
Recall that we need to prove that $\Gamma_{-1,2}=\Gamma_{-1}\cap\Gamma_2$.
It is obvious that $\Gamma_{-1,2}\leq\Gamma_{-1}\cap\Gamma_2$. We have equality if and only if $\Gamma_{-1}\cap\Gamma_2$ acts in the same way as $\Gamma_{-1,2}$ on the $\Gamma_{-1,2}$-orbits.
The possible connected components of $\mathcal{G}_{-1,2}$, whose vertices form these orbits, are, as previously shown, of the following $5$ different shapes.

\begin{center}
\begin{tikzpicture}
\vertex[] (1) at (1,0) {};

\vertex[] (2) at (2,0) {};
\vertex[] (3) at (3,0) {};
\draw[thin,double distance=2pt] (2) -- (3)
 node[pos=0.5,above] {0} node[pos=0.5,below] {1};

\vertex[] (4) at (4,0) {};
\vertex[] (5) at (5,0) {};
\draw[-] (4) -- (5)
 node[pos=0.5,above] {0};

\vertex[] (6) at (6,0) {};
\vertex[] (7) at (7,0) {};
\draw[-] (6) -- (7)
 node[pos=0.5,above] {1};

\vertex[] (8) at (8,0) {};
\vertex[] (9) at (9,0) {};
\vertex[] (10) at (10,0) {};
\draw[-] (8) -- (9)
 node[pos=0.5,above] {0};
\draw[-] (9) -- (10)
 node[pos=0.5,above] {1};
\end{tikzpicture}
\end{center}

As in Lemma \ref{-1,2}, the group $\Gamma_{-1,2}$ acts on the vertices of $\mathcal G'$ as a subcartesian product of its transitive constituants.
Moreover, it is isomorphic to a subgroup of  $S_3\times C_2$.

We denote by $\mathcal{G}'_S$ the subgraph of $\mathcal{G}'$ containing all the $\mathcal{G}'_2$-connected components except the one containing the $(-1)$-edge. It is clear that $\mathcal{G}'_S$ is a union of $\mathcal{G}'_{-1,2}$-connected components.

Let us show, first, that $\mathcal{G}'_S$ must contain a connected component on three vertices if it is not either trivial or a union of single $1$-edges. 

No matter what the biggest $\mathcal{G}'_2$-connected component looks like, we must have the following subgraph.

\begin{center}
\begin{tikzpicture}
\vertex[label=below:$a$] (1) at (0,0) {};
\vertex[label=below:$b$] (2) at (1,0) {};
\vertex[label=below:$c$] (3) at (2,0) {};
\vertex[label=below:$d$] (4) at (3,0) {};
\draw[-] (1) -- (2)
 node[pos=0.5,above] {-1};
\draw[-] (3) -- (2)
 node[pos=0.5,above] {0};
\draw[-] (3) -- (4)
 node[pos=0.5,above] {1};
\end{tikzpicture}
\end{center}

Suppose for a contradiction that $\mathcal{G}'_2$ has no connected component on $3$ vertices and is not a union of single $1$-edges and single vertices. Then $\mathcal{G}'_2$ has a connected component in the form of a single $0$-edge or of a double $0,1$-edge. In both cases we have a $0,2$-alternating square as follows.

\begin{center}
\begin{tikzpicture}
\vertex[] (2) at (1,1) {};
\vertex[] (3) at (2,1) {};
\vertex[fill=black] (4) at (1,0) {};
\vertex[fill=black] (5) at (2,0) {};
\draw[-] (3) -- (2)
 node[pos=0.5,above] {0};
\draw[-] (2) -- (4)
 node[pos=0.5,left] {2};
\draw[-] (3) -- (5)
 node[pos=0.5,right] {2};
\draw[-] (4) -- (5)
 node[pos=0.5,below] {0};
\end{tikzpicture}
\qquad
\begin{tikzpicture}
\vertex[] (2) at (1,1) {};
\vertex[] (3) at (2,1) {};
\vertex[fill=black] (4) at (1,0) {};
\vertex[fill=black] (5) at (2,0) {};
\draw[-] (3) -- (2)
 node[pos=0.5,above] {0};
\draw[-] (2) -- (4)
 node[pos=0.5,left] {2};
\draw[-] (3) -- (5)
 node[pos=0.5,right] {2};
\draw[thin,double distance=2pt] (4) -- (5)
 node[pos=0.5,above] {0} node[pos=0.5,below] {1};
\end{tikzpicture}
\end{center}

In both cases, this square must be connected to the biggest component, hence there has to be either a $1$-edge or a $(-1)$-edge connected to one of the vertices. A $(-1)$-edge would contradict the string condition. A $1$-edge would contradict the fact that $\mathcal{G}'_2$ has no connected component on 3 vertices.

If $\mathcal{G}'_S$ contains a connected component on three vertices, we denote by $G_S$ the group induced  by $\Gamma_{-1,2}$ on $\mathcal{G}'_S$ according to Lemma \ref{-1,2}. If $\mathcal{G}'_S$ does not contain any connected component on three vertices,  then $G_S\cong C_2$ if it has $1$-edges and $G_S\cong \{1\}$ otherwise.

Suppose first that we are in case (1) of Theorem~\ref{RAT1}.
Then the connected component of $\mathcal{G}'_2$ containing the $(-1)$-edge is  the following.

\begin{center}
\begin{tikzpicture}
\vertex[label=below:$1$, fill=black] (1) at (0,0) {};
\vertex[label=below:$2$] (2) at (1,0) {};
\vertex[label=below:$3$] (3) at (2,0) {};
\vertex[label=below:$4$] (4) at (3,0) {};
\draw[-] (1) -- (2)
 node[pos=0.5,above] {-1};
\draw[-] (3) -- (2)
 node[pos=0.5,above] {0};
\draw[-] (3) -- (4)
 node[pos=0.5,above] {1};
\end{tikzpicture}
\end{center}

A quick scan through the possible cases shows that $\Gamma_2$ acts on the $n$ vertices of $\mathcal{G'}$ as $S_4\times G_S$. More precisely, $\Gamma_2$ acts as $S_4$ on the vertices of its orbit of size $4$ and independently as $G_S$ on the rest of its orbits.

We observe that the action of $\Gamma_2$ on the vertices of $\mathcal{G'}$ is only larger than the one of $\Gamma_{-1,2}$ in that $\Gamma_2$ does not fix vertex $1$ and, while it does act on the union of all $\Gamma_{-1,2}$-orbits of size $3$ represented by vertices of $\mathcal{G}'_S$ dependently (that is as $S_3$), it acts on $\{2,3,4\}$ independently.

Since $\Gamma_{-1}\cap\Gamma_2\leq \Gamma_2$, the action of $\Gamma_{-1}\cap\Gamma_2$ cannot be larger than the one of $\Gamma_2$.

Now $\Gamma_{-1}$ fixes vertex $1$ and, by Lemma~\ref{imprim}, there is a $\Gamma_{-1,2}$-orbits of size $3$ represented by vertices of $\mathcal{G}'_S$ on which $\Gamma_{-1}$ does not act independently from $\{2,3,4\}$.

Hence, since $\Gamma_{-1}\cap\Gamma_2\leq\Gamma_{-1}$, the action of $\Gamma_{-1}\cap\Gamma_2$ is the same as the one of $\Gamma_{-1,2}$ on the $n$ vertices of $\mathcal{G}'$. Therefore, we have $\Gamma_{-1}\cap\Gamma_2=\Gamma_{-1,2}$ as required.

Suppose next that we are in case (2) of Theorem~\ref{RAT1}.
Hence the connected component of $\mathcal{G}'_2$ containing the $(-1)$-edge is

\begin{center}
\begin{tikzpicture}
\vertex[fill=black,label=below:$1$] (1) at (0,0) {};
\vertex[label=below:$2$] (2) at (1,0) {};
\vertex[label=below:$3$] (3) at (2,0) {};
\vertex[label=below:$4$] (4) at (3,0) {};
\vertex[label=below:$5$] (5) at (4,0) {};
\draw[-] (1) -- (2)
 node[pos=0.5,above] {0};
\draw[-] (3) -- (2)
 node[pos=0.5,above] {-1};
\draw[-] (3) -- (4)
 node[pos=0.5,above] {0};
\draw[-] (5) -- (4)
 node[pos=0.5,above] {1};
\end{tikzpicture}
\end{center}

Then $\Gamma_2$ acts on the corresponding orbit as $S_5$ (this can again be shown by hand or using {\sc Magma}) and a quick scan through the possible cases shows that $\Gamma_2$ acts on the $n$ vertices of $\mathcal{G}'$ as $S_5\times G_S$.

The action of $\Gamma_2$ is only larger than the one of $\Gamma_{-1,2}$ in that it can permute $1$ and $2$ independently from any other pair of vertices joined by a $0$- or a $1$-edge in $\mathcal{G}'_S$ and, while it does act on the union of all $\Gamma_{-1,2}$-orbits of size $3$ represented by vertices of $\mathcal{G}'_S$ dependently (that is as $S_3$), it acts on $\{3,4,5\}$ independently. By Lemma \ref{imprim}, $\Gamma_{-1}\cap\Gamma_2\leq\Gamma_{-1}$ does not do the latter. We show that it cannot do the former either. 

This is due to the fact that $\rho_0$, $\rho_1$ and $\rho_2$ are all even. Let $K$ be the permutation group whose permutation representation graph is $\mathcal{G}'$ deprived from the $0$-edge between vertices $1$ and $2$ and the $(-1)$-edge (between $2$ and $3$). By our previous remark about the action of $\Gamma_{-1}$ on the $\Gamma_{-1,2}$-orbits of size $3$, $K\cong G_S$. 
We note that $\Gamma_{-1}$ is a sesqui-extension of $K$. By Proposition \ref{sesqui},  either it contains the $(1,2)$ transposition or it is isomorphic to $K$. Since $\rho_0$, $\rho_1$ and $\rho_2$ are all even,  $\Gamma_{-1}\leq A_n$ and it contains no transposition. Hence it is isomorphic to $K\cong G_S$, as required.

 Hence, since $\Gamma_{-1}\cap\Gamma_2\leq\Gamma_{-1}$, the action of $\Gamma_{-1}\cap\Gamma_2$ is the same as the one of $\Gamma_{-1,2}$ on the $n$ vertices of $\mathcal{G}'$. Therefore, we have $\Gamma_{-1}\cap\Gamma_2=\Gamma_{-1,2}$ as required.

Finally, suppose that we are in case (3) of Theorem~\ref{RAT1}.
Hence the largest $\mathcal{G}'_2$-connected component has $6$ vertices.

\begin{center}
\begin{tikzpicture}
\vertex[label=below:$1$] (1) at (0,0) {};
\vertex[label=below:$2$] (2) at (1,0) {};
\vertex[label=below:$3$] (3) at (2,0) {};
\vertex[label=below:$4$] (4) at (3,0) {};
\vertex[label=below:$5$] (5) at (4,0) {};
\vertex[label=below:$6$] (6) at (5,0) {};
\draw[-] (1) -- (2)
 node[pos=0.5,above] {1};
\draw[-] (3) -- (2)
 node[pos=0.5,above] {0};
\draw[-] (3) -- (4)
 node[pos=0.5,above] {-1};
\draw[-] (5) -- (4)
 node[pos=0.5,above] {0};
\draw[-] (5) -- (6)
 node[pos=0.5,above] {1};
\end{tikzpicture}
\end{center}

It is rather easy to show (by hand or using {\sc Magma}) that $\Gamma_2$ acts on the corresponding orbit as the symmetry group of the cube, i.e. $C_2\times S_4$, where the elements of the orbit can be viewed as the six faces of the said cube. In particular, $\Gamma_2$ does not act independently on $\{1,2,3\}$ and $\{4,5,6\}$. Hence the action of $\Gamma_{-1}\cap\Gamma_2$ is the same as the one of $\Gamma_{-1,2}$ on the $n$ vertices of $\mathcal{G}'$. Therefore, we have $\Gamma_{-1}\cap\Gamma_2=\Gamma_{-1,2}$ as required.

This, combined with Section~\ref{part1} and Section~\ref{part2}, concludes the proof of Theorem~\ref{RAT1}.


\section{Conclusion}\label{conclusion}

The proof of Theorem~\ref{RAT1} shows that reversing the Rank Reduction Theorem is highly non-trivial.

In order to motivate the extra conditions $(1)$ and $(2)$ of Theorem~\ref{RAT1}, we give here examples where the rank augmentation fails. All non-trivial computations were carried out using {\sc Magma}.

Let us consider the following CPR graph.

\begin{center}
\begin{tikzpicture}
\vertex[label=below:$1$] (1) at (0,1) {};
\vertex[label=below:$2$] (2) at (1,1) {};
\vertex[label=below:$3$] (3) at (2,1) {};
\vertex[label=below:$4$] (4) at (3,1) {};
\vertex[label=above:$5$] (5) at (4,1) {};
\vertex[label=above:$6$] (6) at (5,1) {};
\vertex[label=above:$7$] (7) at (6,1) {};
\vertex[label=right:$8$] (8) at (6,0) {};
\vertex[label=left:$9$] (9) at (5,0) {};
\draw[-] (1) -- (2)
 node[pos=0.5,above] {1};
\draw[-] (3) -- (2)
 node[pos=0.5,above] {0};
\draw[-] (3) -- (4)
 node[pos=0.5,above] {1};
\draw[-] (5) -- (4)
 node[pos=0.5,above] {2};
\draw[-] (5) -- (6)
 node[pos=0.5,above] {1};
\draw[-] (7) -- (6)
 node[pos=0.5,above] {2};
\draw[-] (7) -- (8)
 node[pos=0.5,right] {0};
\draw[-] (9) -- (6)
 node[pos=0.5,left] {0};
\draw[-] (9) -- (8)
 node[pos=0.5,below] {2};
\end{tikzpicture}
\end{center}

It is a permutation representation graph for  the string C-group $G:=\langle(2,3)(6,9)(7,8),(1,2)(3,4)(5,6),(4,5)(6,7)(8,9)\rangle\cong S_9$.

If we augment the rank of this string C-group by replacing the leftmost $1$-edge in its CPR graph by a $(-1)$-edge, we obtain a group $\Gamma$ with the following permutation graph.

\begin{center}
\begin{tikzpicture}
\vertex[label=below:$1$] (1) at (0,1) {};
\vertex[label=below:$2$] (2) at (1,1) {};
\vertex[label=below:$3$] (3) at (2,1) {};
\vertex[label=below:$4$] (4) at (3,1) {};
\vertex[label=above:$5$] (5) at (4,1) {};
\vertex[label=above:$6$] (6) at (5,1) {};
\vertex[label=above:$7$] (7) at (6,1) {};
\vertex[label=right:$8$] (8) at (6,0) {};
\vertex[label=left:$9$] (9) at (5,0) {};
\draw[-] (1) -- (2)
 node[pos=0.5,above] {-1};
\draw[-] (3) -- (2)
 node[pos=0.5,above] {0};
\draw[-] (3) -- (4)
 node[pos=0.5,above] {1};
\draw[-] (5) -- (4)
 node[pos=0.5,above] {2};
\draw[-] (5) -- (6)
 node[pos=0.5,above] {1};
\draw[-] (7) -- (6)
 node[pos=0.5,above] {2};
\draw[-] (7) -- (8)
 node[pos=0.5,right] {0};
\draw[-] (9) -- (6)
 node[pos=0.5,left] {0};
\draw[-] (9) -- (8)
 node[pos=0.5,below] {2};
\end{tikzpicture}
\end{center}

Note that the action of $\Gamma_{-1}$ on its biggest (sole non-trivial) orbit (represented by the eight rightmost vertices of the above graph) is primitive (in fact $\Gamma_{-1}\cong S_8$). 

Note also that $\Gamma_2$, as in all examples corresponding to case $(1)$ of Theorem \ref{RAT1}, acts independently on the $4$ leftmost vertices, as $S_4$.

This new sggi does not satisfy the intersection property since $\Gamma_{-1}\cap\Gamma_2$ acts independently on $\{2,3,4\}$ and $\{5,6,9\}$ while $\Gamma_{-1,2}$ does not.

Now let us look at case $(2)$ of Theorem \ref{RAT1} and consider the following CPR graph.

\begin{center}
\begin{tikzpicture}
\vertex[label=below:$1$] (1) at (0,1) {};
\vertex[label=below:$2$] (2) at (1,1) {};
\vertex[label=below:$3$] (3) at (2,1) {};
\vertex[label=below:$4$] (4) at (3,1) {};
\vertex[label=below:$5$] (5) at (4,1) {};
\vertex[label=below:$6$] (6) at (5,1) {};
\vertex[label=below:$7$] (7) at (6,1) {};
\vertex[label=below:$8$] (8) at (7,1) {};
\vertex[label=below:$9$] (9) at (8,1) {};
\vertex[label=below:$10$] (10) at (9,1) {};
\draw[-] (1) -- (2)
 node[pos=0.5,above] {0};
\draw[-] (3) -- (2)
 node[pos=0.5,above] {1};
\draw[-] (3) -- (4)
 node[pos=0.5,above] {0};
\draw[-] (5) -- (4)
 node[pos=0.5,above] {1};
\draw[-] (5) -- (6)
 node[pos=0.5,above] {2};
\draw[-] (7) -- (6)
 node[pos=0.5,above] {1};
\draw[-] (7) -- (8)
 node[pos=0.5,above] {2};
\draw[-] (9) -- (8)
 node[pos=0.5,above] {1};
\draw[-] (9) -- (10)
 node[pos=0.5,above] {0};
\end{tikzpicture}
\end{center}

It is the permutation representation graph for the string C-group $G:=\langle(1,2)(3,4)(9,10),(2,3)(4,5)(6,7)(8,9),(5,6)(7,8)\rangle\cong S_{10}$.

If we augment the rank of this string C-group by replacing the leftmost $1$-edge in its CPR graph by a $(-1)$-edge, we obtain a group $\Gamma$ with the following permutation graph.

\begin{center}
\begin{tikzpicture}
\vertex[label=below:$1$] (1) at (0,1) {};
\vertex[label=below:$2$] (2) at (1,1) {};
\vertex[label=below:$3$] (3) at (2,1) {};
\vertex[label=below:$4$] (4) at (3,1) {};
\vertex[label=below:$5$] (5) at (4,1) {};
\vertex[label=below:$6$] (6) at (5,1) {};
\vertex[label=below:$7$] (7) at (6,1) {};
\vertex[label=below:$8$] (8) at (7,1) {};
\vertex[label=below:$9$] (9) at (8,1) {};
\vertex[label=below:$10$] (10) at (9,1) {};
\draw[-] (1) -- (2)
 node[pos=0.5,above] {0};
\draw[-] (3) -- (2)
 node[pos=0.5,above] {-1};
\draw[-] (3) -- (4)
 node[pos=0.5,above] {0};
\draw[-] (5) -- (4)
 node[pos=0.5,above] {1};
\draw[-] (5) -- (6)
 node[pos=0.5,above] {2};
\draw[-] (7) -- (6)
 node[pos=0.5,above] {1};
\draw[-] (7) -- (8)
 node[pos=0.5,above] {2};
\draw[-] (9) -- (8)
 node[pos=0.5,above] {1};
\draw[-] (9) -- (10)
 node[pos=0.5,above] {0};
\end{tikzpicture}
\end{center}

Note that the action of $\Gamma_{-1}$ on its biggest orbit (represented by the eight rightmost vertices of the above graph) is imprimitive (with blocks $\{3,10\}$, $\{4,9\}$, $\{5,8\}$, $\{6,7\}$) but neither $\rho_0$ nor $\rho_1$ is even. In fact, $\Gamma_{-1}$ is a $0$-sesqui-extension of $\langle(3,4)(9,10),(4,5)(5,7)(8,9),(5,6)(7,8)\rangle$ (denoted by $K$  in Section \ref{part3}) with respect to $(1,2)$ that contains the transposition $(1,2)$.

Note also that $\Gamma_2$ acts as $S_5$ on the five leftmost vertices of the permutation graph (as it will do in all examples of case $(2)$ of Theorem \ref{RAT1}).

We can conclude that $\Gamma$ does not satisfy the intersection property since $(1,2)$ belongs to $\Gamma_{-1}\cap\Gamma_2$ but not to $\Gamma_{-1,2}$.

In this last example, we have the parity of $\rho_0,\rho_1$ and $\rho_2$ but not the imprimitivity hypothesis.

Let us consider the following CPR graph.

\begin{center}
\begin{tikzpicture}
\vertex[label=below:$1$] (1) at (0,1) {};
\vertex[label=below:$2$] (2) at (1,1) {};
\vertex[label=below:$3$] (3) at (2,1) {};
\vertex[label=below:$4$] (4) at (3,1) {};
\vertex[label=below:$5$] (5) at (4,1) {};
\vertex[label=below:$6$] (6) at (5,1) {};
\vertex[label=above:$7$] (7) at (6,1) {};
\vertex[label=above:$8$] (8) at (7,1) {};
\vertex[label=left:$9$] (9) at (6,0) {};
\vertex[label=below:$10$] (10) at (7,0) {};
\vertex[label=above:$11$] (11) at (8,0) {};
\vertex[label=above:$12$] (12) at (9,0) {};
\vertex[label=above:$13$] (13) at (10,0) {};
\draw[-] (1) -- (2)
 node[pos=0.5,above] {0};
\draw[-] (3) -- (2)
 node[pos=0.5,above] {1};
\draw[-] (3) -- (4)
 node[pos=0.5,above] {0};
\draw[-] (5) -- (4)
 node[pos=0.5,above] {1};
\draw[-] (5) -- (6)
 node[pos=0.5,above] {2};
\draw[-] (7) -- (6)
 node[pos=0.5,above] {1};
\draw[-] (7) -- (8)
 node[pos=0.5,above] {2};
\draw[-] (7) -- (9)
 node[pos=0.5,left] {0};
\draw[-] (8) -- (10)
 node[pos=0.5,right] {0};
\draw[-] (10) -- (9)
 node[pos=0.5,below] {2};
\draw[-] (10) -- (11)
 node[pos=0.5,below] {1};
\draw[-] (11) -- (12)
 node[pos=0.5,below] {2};
\draw[-] (13) -- (12)
 node[pos=0.5,below] {1};
\end{tikzpicture}
\end{center}

It is the permutation representation graph for a string C-group $G:=\langle(1,2)(3,4)(7,9)(8,10),(2,3)(4,5)(6,7)(10,11)(12,13),(5,6)(7,8)(9,10)(11,12)\rangle\cong S_{13}$.

If we augment the rank of this string C-group by replacing the leftmost $1$-edge in its CPR graph by a $(-1)$-edge,  we obtain a group $\Gamma$ with the following permutation graph.

\begin{center}
\begin{tikzpicture}
\vertex[label=below:$1$] (1) at (0,1) {};
\vertex[label=below:$2$] (2) at (1,1) {};
\vertex[label=below:$3$] (3) at (2,1) {};
\vertex[label=below:$4$] (4) at (3,1) {};
\vertex[label=below:$5$] (5) at (4,1) {};
\vertex[label=below:$6$] (6) at (5,1) {};
\vertex[label=above:$7$] (7) at (6,1) {};
\vertex[label=above:$8$] (8) at (7,1) {};
\vertex[label=left:$9$] (9) at (6,0) {};
\vertex[label=below:$10$] (10) at (7,0) {};
\vertex[label=above:$11$] (11) at (8,0) {};
\vertex[label=above:$12$] (12) at (9,0) {};
\vertex[label=above:$13$] (13) at (10,0) {};
\draw[-] (1) -- (2)
 node[pos=0.5,above] {0};
\draw[-] (3) -- (2)
 node[pos=0.5,above] {-1};
\draw[-] (3) -- (4)
 node[pos=0.5,above] {0};
\draw[-] (5) -- (4)
 node[pos=0.5,above] {1};
\draw[-] (5) -- (6)
 node[pos=0.5,above] {2};
\draw[-] (7) -- (6)
 node[pos=0.5,above] {1};
\draw[-] (7) -- (8)
 node[pos=0.5,above] {2};
\draw[-] (7) -- (9)
 node[pos=0.5,left] {0};
\draw[-] (8) -- (10)
 node[pos=0.5,right] {0};
\draw[-] (10) -- (9)
 node[pos=0.5,below] {2};
\draw[-] (10) -- (11)
 node[pos=0.5,below] {1};
\draw[-] (11) -- (12)
 node[pos=0.5,below] {2};
\draw[-] (13) -- (12)
 node[pos=0.5,below] {1};
\end{tikzpicture}
\end{center}

Note that $\rho_0$, $\rho_1$ and$\rho_2$ are even permutations but the action of $\Gamma_{-1}$ on its biggest orbit (represented by the eleven rightmost vertices of the above graph) is primitive even though it contains more than one $\Gamma_{-1,2}$-orbit of size $3$. In fact, $\Gamma_{-1}$ acts as $S_{11}$ on the aforementioned orbit.

Note also that, as mentioned in the previous example, $\Gamma_2$ acts as $S_5$ on the five leftmost vertices.

We can conclude that $\Gamma$ does not satisfy the intersection property since $\Gamma_{-1}$ and $\Gamma_2$ act independently on $\{3,4,5\}$ and, say, $\{6,7,9\}$ (but also $\{8,10,11\}$) while $\Gamma_{-1,2}$ does not.



The hypothesis on the length of the orbits of $\Gamma_{-1,2}$ in Theorem~\ref{RAT1} implies that the Schl\"afli type of the string C-group representation we get with the rank augmentation theorem starts with a 3 or a 6.
A natural extension of this work would be to see what can be said if we remove that hypothesis. Now $\Gamma_{-1,2}=\langle\rho_0,\rho_1\rangle$ is always a dihedral group $D_{2p_1}$ where $p_1$ is the order of $(\rho_0\rho_1)$.
When we have $\Gamma_{-1,2}$-orbits of size no larger than $3$, the action of $\Gamma_{-1}$ and $\Gamma_2$ on these orbits can never exceed $S_3\cong D_6$.
When one allows larger orbits, their action on these can reach $S_m$ (where $m$ is the size of the said orbit), which is not isomorphic to  a dihedral group. Hence, in the latter case, we risk not having $\Gamma_{-1}\cap\Gamma_2=\Gamma_{-1,2}$.

Another natural extension of this work would be to take string C-group representations of higher ranks and try to augment their rank.


\section{Acknowledgements}

The authors gratefully acknowledge financial support from the Actions de Recherche Concertées of the Communauté Française Wallonie Bruxelles. 

\bibliographystyle{plain}
\bibliography{mybibliography}
\end{document}